\theoremstyle{plain}
\newtheorem*{theorem*}{Theorem}
\newtheorem{theorem}{Theorem}[section]
\newtheorem{proposition}[theorem]{Proposition}
\newtheorem{lemma}[theorem]{Lemma}
\newtheorem*{conjecture*}{Conjecture}
\theoremstyle{definition}
\newtheorem{definition}[theorem]{Definition}
\DeclareMathOperator{\codim}{codim}
\DeclareMathOperator{\rank}{rank}
\DeclareMathOperator{\Proj}{Proj}
\DeclareMathOperator{\Spec}{Spec}
\DeclareMathOperator{\Sym}{Sym}
\DeclareMathOperator{\blow}{Bl}
\DeclareMathOperator{\id}{id}
\newcommand{\ZZ}{\mathbb{Z}}
\newcommand{\PP}{\mathbb{P}}
\newcommand{\EEs}{\mathscr{E}}
\newcommand{\FFs}{\mathscr{F}}
\newcommand{\GGs}{\mathscr{G}}
\newcommand{\OOs}{\mathscr{O}}
\newcommand{\LLs}{\mathscr{L}}
\begin{document}

\title{A relative Segre zeta function}
\author{Grayson Jorgenson}
\date{}
\maketitle

\begin{abstract}
The choice of a homogeneous ideal in a polynomial ring defines a closed subscheme $Z$ in a projective space as well as an infinite sequence of cones over $Z$ in progressively higher dimension projective spaces. Recent work of Aluffi introduces the Segre zeta function, a rational power series with integer coefficients which captures the relationship between the Segre class of $Z$ and those of its cones. The goal of this note is to define a relative version of this construction for closed subschemes of projective bundles over a smooth variety. If $Z$ is a closed subscheme of such a projective bundle $P(E)$, this relative Segre zeta function will be a rational power series which describes the Segre class of the cone over $Z$ in every projective bundle ``dominating'' $P(E)$. When the base variety is a point we recover the absolute Segre zeta function for projective spaces. Part of our construction requires $Z$ to be the zero scheme of a section of a bundle on $P(E)$ of rank smaller than that of $E$ that is able to extend to larger projective bundles. The question of what bundles may extend in this sense seems independently interesting and we discuss some related results, showing that at a minimum one can always count on direct sums of line bundles to extend. Furthermore, the relative Segre zeta function depends only on the Segre class of $Z$ and the total Chern class of the bundle defining $Z$, and the basic forms of the numerator and denominator can be described. As an application of our work we derive a Segre zeta function for products of projective spaces and prove its key properties.
\end{abstract}

\section{Introduction}

\subsection{Motivation}

For a homogeneous ideal $I\subseteq k[x_0,\ldots,x_n]$ where $k$ is any field, one can consider the Segre class $s(Z,\PP^n)$ of the closed subscheme $i: Z := \Proj(k[x_0,\ldots,x_n]/I) \hookrightarrow \PP^n$ defined by $I$. This class lives in the Chow group $A_*(Z)$ of $Z$ and is a central object in Fulton-MacPherson intersection theory \cite{fulton1}, containing information about the embedding of $Z$ into $\PP^n$. As it is usually the case that $A_*(Z)$ lacks a simple presentation, one often considers the pushforward $i_*s(Z,\PP^n)\in A_*(\PP^n)\cong \ZZ[t]/(t^{n+1}).$ Key information from the Segre class carries over to its pushforward and may be extracted from the coefficients of the minimum degree integer polynomial representing $i_*s(Z,\PP^n)$. Consequently the problem of computing this polynomial has received a substantial amount of attention in the past several years \cite{aluffi4} \cite{eklund1} \cite{harris1} \cite{helmer1} resulting in a number of algorithms to compute the Segre class of such a $Z$, including one currently implemented in the \texttt{CharacteristicClasses} package of the Macaulay2 computer algebra system \cite{grayson_stillman1}.

For $N\geq n$, if we denote by $I_N$ the extension of the ideal $I$ to the larger polynomial ring $k[x_0,\ldots,x_N]$ via the inclusion, we may consider the closed subscheme $i_N: Z_N\hookrightarrow \PP^N$ cut out by $I_N$. We call this the \emph{cone} over $Z$ in $\PP^N$ since $\PP^n$ embeds into $\PP^N$ as the linear subvariety cut out by $x_{n+1},\ldots,x_N$, and on closed points $Z$ is the image of $Z_N$ by the linear projection of $\PP^N$ onto $\PP^n$ away from the subvariety defined by $x_0,\ldots,x_n$. It is natural to ask whether the Segre classes of the members of this infinite sequence of cones are all related.

The answer was recently given by Aluffi \cite{aluffi1} in the \emph{Segre zeta function} $\zeta_I(t)$ of $I$, which is defined to be the power series $$\zeta_I(t) = \sum_{j\geq 0}a_jt^j\in \ZZ[[t]]$$ satisfying $$\zeta_I(H_N) = (i_N)_* s(Z_N,\PP^N)$$ for every $N\geq n$, where $H_N\in A_*(\PP^N)$ is the hyperplane class.

This is well-defined and the series turns out to be rational \cite[Theorem 5.8]{aluffi1}. Rationality implies that in order to know the pushforward of the Segre class of each of the cones in this infinite sequence it suffices to know just one of the $(i_N)_* s(Z_N,\PP^N)$ for a large enough $N$. When one picks a homogeneous generating set for $I$ the rational expression equal to $\zeta_I(t)$ and the $N$ required above can be described more explicitly. These facts, when used in conjunction with existing Segre class algorithms, can be used to significantly improve the speed of computing $i_*s(Z,\PP^n)$ in some cases \cite[Example 5.11]{aluffi1}. The Segre zeta function itself is a direct source of other invariants of projective schemes as well; for example, if $Z$ is a smooth variety, the polar degrees of $Z$ appear in the coefficients of $\zeta_I(-\frac{t}{1+t})$ \cite[Proposition 6.2]{aluffi1}.

The purpose of this note is to explore a generalization of the Segre zeta function. The linear projection mentioned earlier is the rational map $\PP^N\dashrightarrow \PP^n$ corresponding to the inclusion $k[x_0,\ldots,x_n]\hookrightarrow k[x_0,\ldots,x_N]$. Alternatively, one may derive this rational morphism from the natural surjection of bundles $O^{\oplus(N+1)}_{\{\mathrm{pt}\}}\rightarrow O^{\oplus(n+1)}_{\{\mathrm{pt}\}}$ over a point where $\PP^N, \PP^n$ are interpreted as the projective bundles associated to these vector bundles.

Therefore, more generally, one could try to define cones in all projective bundles over a smooth variety that dominate a given one, where we say one projective bundle dominates another if there is a surjection of the underlying vector bundles. In this case that surjection of bundles induces a rational map of projective bundles that is fiberwise a linear projection. This is the nature of the generalization we establish here; we will construct a \emph{relative} Segre zeta function that describes the Segre classes of the cones in these larger projective bundles.

Note that in the absolute case, only projective spaces are considered, and these organize into an infinite \emph{sequence} of projections from progressively larger projective spaces.

\begin{center}
\begin{tikzcd}
Z \arrow[d, hookrightarrow] & {} & {} & {} & {} & {} & {}\\
\PP^n & \PP^{n+1} \arrow[l, dashrightarrow] & \PP^{n+2} \arrow[l, dashrightarrow] & \PP^{n+3} \arrow[l, dashrightarrow] & \PP^{n+4} \arrow[l, dashrightarrow] & \PP^{n+5} \arrow[l, dashrightarrow] & \cdots \arrow[l, dashrightarrow]
\end{tikzcd}
\end{center}

However, an essential feature of our more general context is that there can be many non-isomorphic projective bundles of the same dimension dominating any one projective bundle $P(E)$ \cite[Theorem 9.5]{eisenbud1}, and these make up an infinite \emph{tree} rooted at $P(E)$, with possibly many branches. When we have a sufficiently nice closed subscheme $Z$ of $P(E)$ our relative Segre zeta function will describe the Segre class of the cones over $Z$ in all these larger projective bundles.

More specifically suppose we have such a rational map $P(F)\dashrightarrow P(E)$ of projective bundles over a smooth variety $X$, and suppose $i:Z\hookrightarrow P(E)$ is a closed subscheme of $P(E)$ which is the zero scheme of a section $s$ of a bundle $G$ on $P(E)$. Then provided the rank of $G$ is less than that of $E$ and provided there exists a bundle on $P(F)$ and a section of this bundle sufficiently ``compatible'' with $G$ and $s$, a notion which we will make precise below, we may define a subscheme $\hat{Z}$ of $P(F)$ which will be the cone over $Z$ in a sense that generalizes the cones of the absolute case. The problem of finding compatible bundles and sections seems independently interesting and is related to the difficulties with extending bundles from open subschemes. At least when $G$ splits as a sum of line bundles, such an extension may always be found.

In this situation we define the relative Segre zeta function to be a formal power series $\zeta_{G,s}(t)$ with coefficients from $A_*(X)$, with the property that when we evaluate it at the tautological class $c_1(O_{P(E)}(1)\cap [P(E)]$ of $A_*(P(E))$ we obtain the pushforward of $s(Z,P(E))$ to $A_*(P(E))$, and we obtain the pushforward of $s(\hat{Z},P(F))$ when we evaluate $\zeta_{G,s}(t)$ at the tautological class of $A_*(P(F))$. Furthermore this series is rational by construction, $$\zeta_{G,s}(t) = \frac{P(t)}{Q(t)},$$ for some polynomials $P(t), Q(t)\in A_*(X)[t]$ depending only on the representations of the classes $c(G)\cap i_*s(Z,P(E))$ and $c(G)\cap [P(E)]$ in $A_*(P(E))$.

As an immediate consequence of our work we derive a Segre zeta function for products of projective spaces which has already been used in \cite{aluffi2} as a device for expressing the Chern-Schwartz-MacPherson class of a smooth projective variety \cite[\S 4]{aluffi2}. In the following part of this section, we introduce needed notation and state our results precisely.

\subsection{Statement of the results}

Throughout we take our base field $k$ to be algebraically closed and for us varieties are integral schemes separated and of finite type over $k$. Let $E$ be a rank $e$ algebraic vector bundle on a smooth, $n$-dimensional variety $X$. We denote by $P(E)$ the projective bundle of lines in $E$. The analogy to choosing a homogeneous ideal in the projective space setting to specify a closed subscheme is choosing a bundle $G$ and one of its sections and indeed every closed subscheme of a projective bundle over a smooth variety is the zero scheme of such a section \cite[B.8.1]{fulton1} 

If $G$ is a bundle on $P(E)$, $s: P(E)\rightarrow G$ any section of $G$, and $s_0$ the zero section embedding of $P(E)$ into $G$, then the zero scheme of $s$ is the scheme $Z$ completing the following fiber square:
\begin{center}
\begin{equation}
\begin{tikzcd}
Z := P(E) \times_{G} P(E) \arrow[r, hookrightarrow] \arrow[d, hookrightarrow, "i"] & P(E) \arrow[d, hookrightarrow, "s"]\\
P(E) \arrow[r, hookrightarrow, "s_0"] & G
\end{tikzcd}
\end{equation}
\end{center} 

Just like with the Chow rings of projective spaces, the Chow ring of $P(E)$ has a nice description as a ring: $$A_*(P(E))\cong A_*(X)[t]/(t^e + c_1(E)t^{e-1} + \ldots + c_{e}(E))$$ where the isomorphism identifies $c_1(O_{P(E)}(1))\cap [P(E)]$ with $t$. For simplicity of notation, whenever we invoke this identification we will abbreviate the Chern class notation; in this isomorphism $c_i(E)$ refers to $c_i(E)\cap[X]$. By pushing forward the Segre class $s(Z,P(E))$ of $Z$ in $P(E)$ to $P(E)$ via the embedding $i$ and using the above isomorphism, we obtain a class we can describe as a polynomial in $t$ with coefficients from $A_*(X)$. Tautologically we may write $$i_* s(Z,P(E)) = c(G)^{-1}\cap(c(G)\cap i_*s(Z,P(E))),$$ which presents $i_* s(Z,P(E))$ as a sort of rational expression in $t$.

\begin{definition}
More precisely, let $P(t)\in A_*(X)[t]$ be the lowest degree polynomial representing the class $$c(G)\cap i_*s(Z,P(E))\in A_*(X)[t]/(t^e + c_1(E)t^{e-1} + \ldots + c_{e}(E)),$$ and $Q(t)$ the lowest degree polynomial representing the class $c(G)\cap [P(E)]$. We define the \emph{Segre zeta function} of $G$ with respect to $s$ to be the rational expression $$\zeta_{G,s}(t) := \frac{P(t)}{Q(t)}\in A_*(X)[[t]].$$
\end{definition}


Suppose we are given a surjection of bundles $F\rightarrow E$. Such a surjection induces a rational map of the corresponding projective bundles $\phi: P(F)\dashrightarrow P(E)$ and we let $U$ be the open subscheme of $P(F)$ where $\phi$ is a morphism. To define what we mean by a cone over $Z$ in $P(F)$, we try to find a bundle on $P(F)$ and section of said bundle which will serve as a kind of extension of $G$ and $s$ to $P(F)$.  Specifically, we require a bundle $\hat{G}$ on $P(F)$ and a section $\hat{s}$ of $\hat{G}$ satisfying the following two conditions:

\begin{enumerate}[label=\roman*]
\item there exists a bundle isomorphism $\hat{G}\big|_U\cong (\phi\big|_U)^*G$,
\item there is a section $\hat{s}: P(F)\rightarrow \hat{G}$ making
\begin{center}
\begin{tikzcd}
\hat{G}\big|_U \arrow[r, leftrightarrow, "\cong"] & (\phi\big|_U)^* G \arrow[dl, leftarrow, "(\phi\big|_U)^* s"]\\
U \arrow[u, "\hat{s}\big|_U"] & {}
\end{tikzcd}
\end{center}
commute.
\end{enumerate}

If such a bundle and section exist we define the \emph{cone} in $P(F)$ over $Z$, denoted $\hat{Z}$, to be the zero scheme of $\hat{s}$. This scheme can still be thought of as a cone in a geometric sense; indeed the rational map $\phi$ is fiberwise a projection from a linear subvariety, and on closed points, $$\hat{Z} = \overline{(\phi\big|_U)^{-1}(Z)}.$$ Note that even when $Z$ is reduced it does not suffice in general to define $\hat{Z}$ by taking the reduced scheme structure on the set-theoretic closure of $(\phi\big|_U)^{-1}(Z)$: in the absolute case for instance, the closed subscheme of $\PP^1(x:y)$ defined by the ideal $(x^2, xy)$ is reduced but the cone cut out by the same equations in $\PP^2(x:y:z)$ is not. 

Our main result is:

\begin{theorem}
Suppose $g < e$, let $F\rightarrow E$ be any surjection of bundles on $X$, and let $\phi: P(F)\dashrightarrow P(E)$ be the induced rational map. If $\hat{G}$ is a bundle on $P(F)$ with section $\hat{s}$ which satisfy properties (i), (ii) above, then letting $\hat{i}: \hat{Z}\hookrightarrow P(F)$ denote the zero scheme of $\hat{s}$, we have $$\hat{i}_* s(\hat{Z},P(F)) = \zeta_{G, s}(c_1(O_{P(F)}(1))\cap[P(F)]).$$
\end{theorem}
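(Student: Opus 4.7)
The plan is to reduce the theorem to two identities in $A_*(P(F))$:
\[
  c(\hat{G}) \cap [P(F)] = Q(t_F), \qquad c(\hat{G}) \cap \hat{i}_* s(\hat{Z}, P(F)) = P(t_F),
\]
where $t_F := c_1(O_{P(F)}(1)) \cap [P(F)]$. Granted both, $Q(t_F)$ has constant term $[P(F)]$ and is therefore invertible in $A^*(P(F))$ (its higher pieces being nilpotent); dividing yields $\hat{i}_* s(\hat{Z}, P(F)) = (P/Q)(t_F) = \zeta_{G,s}(t_F)$, which is the theorem.

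For the first identity, let $K = \ker(F \to E)$, so $P(F) \setminus U = P(K)$ has codimension $e$ in $P(F)$. Property (i), together with $(\phi|_U)^* O_{P(E)}(1) \cong O_{P(F)}(1)|_U$, gives $c(\hat{G})|_U = (\phi|_U)^* c(G)$, which in $A^*(U)$ coincides with the restriction of $Q(c_1(O_{P(F)}(1)))$. Hence $c(\hat{G}) - Q(c_1(O_{P(F)}(1)))$ is supported on $P(K)$, lying in codimension $\geq e$. But both terms are concentrated in codimension $\leq g < e$ (the $A^*(X)$-coefficients of $Q$ encode only the Chern classes $c_i(G)$ for $i \leq g$), so the difference vanishes; capping with $[P(F)]$ gives the first identity.

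For the second identity, the restriction $\phi|_U$ is flat, realizing $U$ as an $\AS^{f-e}$-bundle over $P(E)$, and property (ii) gives $\hat{Z} \cap U = (\phi|_U)^{-1}(Z)$ scheme-theoretically. Combining locality of Segre classes with flat pullback compatibility yields $\hat{i}_* s(\hat{Z}, P(F))|_U = (\phi|_U)^* i_* s(Z, P(E))$, and capping with $c(\hat{G})|_U$ identifies the restriction $(c(\hat{G}) \cap \hat{i}_* s(\hat{Z}, P(F)))|_U$ with $P(t_F)|_U$. Therefore the difference $D := c(\hat{G}) \cap \hat{i}_* s(\hat{Z}, P(F)) - P(t_F)$ is supported on $P(K)$. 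Since the class $[P(K)]$ in $A_*(P(F))$ is represented by the polynomial $M_E(t) := t^e + c_1(E) t^{e-1} + \cdots + c_e(E)$ (coming from the regular section of $\pi_F^* E \otimes O_{P(F)}(1)$ vanishing on $P(K)$), the submodule of classes in $A_*(X)[t]/(M_F(t))$ supported on $P(K)$ equals the principal ideal $(M_E(t))$, and so $D = M_E(t_F) R(t_F)$ for some $R(t) \in A_*(X)[t]$ of degree $< f - e$.

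The main obstacle is to show $R = 0$. The hypothesis $g < e$ ensures that no component of $\hat{Z}$ lies in $P(K)$, so $\hat{Z}$ is the scheme-theoretic closure of $(\phi|_U)^{-1}(Z)$. To conclude, I would resolve $\phi$ via the blow-up $b: B \to P(F)$ along $P(K)$, which carries the structure of a $\PP^{f-e}$-bundle via the extension $\phi': B \to P(E)$ of $\phi$, hence flat. The scheme-theoretic preimage $(\phi')^{-1}(Z)$ should coincide with the strict transform of $\hat{Z}$, and flat pullback gives $s((\phi')^{-1}(Z), B) = (\phi')^* s(Z, P(E))$. Applying the birational invariance of Segre classes under the proper birational $b$ (Fulton, Proposition 4.2), one recovers $\hat{i}_* s(\hat{Z}, P(F))$ as $b_*$ of this class; translating into the polynomial representation of $A_*(X)[t_F]/(M_F)$ must then force the $M_E(t_F)$-contribution to vanish, yielding $R = 0$.
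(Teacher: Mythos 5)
Your reduction to the two identities $c(\hat{G})\cap[P(F)]=Q(t_F)$ and $c(\hat{G})\cap\hat{i}_*s(\hat{Z},P(F))=P(t_F)$ is sound, and your proof of the first one is correct: it is essentially the paper's Lemmas 2.1--2.4 repackaged via the exact sequence $A_*(P(K))\to A_*(P(F))\to A_*(U)\to 0$ and the observation that classes pushed forward from $P(K)$ sit in codimension $\geq e$ while both sides live in codimension $\leq g<e$. The restriction-to-$U$ step for the second identity (flatness of $\phi|_U$, $\hat{Z}\cap U=(\phi|_U)^{-1}(Z)$ scheme-theoretically, and Fulton's Proposition 4.2(b)) is also fine, and correctly places the difference $D$ in the image of $A_*(P(K))$.

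The gap is in closing the second identity. You have shown only that $D$ is supported on $P(K)$, i.e.\ lies in dimension $\leq\dim P(K)$, and you cannot rule out such a contribution without a codimension bound on $c(\hat{G})\cap\hat{i}_*s(\hat{Z},P(F))$ itself. The missing ingredient is precisely the paper's Lemma 2.5 applied to $(\hat{G},\hat{s},\hat{Z})$ on $P(F)$: since $\hat{Z}$ is the zero scheme of a section of the rank $g$ bundle $\hat{G}$ on the variety $P(F)$, the class $c(\hat{G})\cap\hat{i}_*s(\hat{Z},P(F))$ is the pushforward of $c(\xi)\cap[P(C\oplus 1)]$ with $\xi$ the rank $g$ universal quotient and $P(C\oplus 1)$ pure of dimension $\dim P(F)$ (Fulton, Example 6.1.6 and B.6.6), hence has no components of codimension $>g$. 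With that in hand, $D$ has codimension $\leq g<e$ while also having codimension $\geq e$, so $D=0$ and your $R$ vanishes for free --- no blow-up needed. Your actual proposed resolution via the blow-up does not work as stated: Fulton's Proposition 4.2(a) relates $s(\hat{Z},P(F))$ to the Segre class of the \emph{scheme-theoretic inverse image} $b^{-1}(\hat{Z})$, which in general agrees with neither the strict transform of $\hat{Z}$ nor with $(\phi')^{-1}(Z)$ over the exceptional divisor, and the assertion that the resulting identity ``must force'' the $M_E(t_F)$-contribution to vanish is exactly the point at issue, not an argument. (The paper handles this same difficulty by quoting Aluffi's Corollary 4.5, whose proof carries out the blow-up analysis carefully, in combination with its Lemma 2.5.)
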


In other words, when the criteria of Theorem 1.2 are satisfied, the ``same'' rational expression that gives us $i_*s(Z,P(E))$ also gives $\hat{i}_*s(\hat{Z},P(F))$. The rank constraint is necessary to ensure that the representations for $c(G)\cap i_*s(Z,P(E))$ and $c(G)\cap [P(E)]$ are unaffected by the relation defining the presentation of $A_*(P(E))$.

We in fact know a little more about the numerator of $\zeta_{G,s}(t)$. The highest dimension term of $c(G)\cap i_*s(Z,P(E))$ is the top dimension term of $i_*[Z]$, where $[Z]$ is the fundamental class of $Z$ in $A_*(Z)$, see \cite[\S 1.5, Example 4.3.4]{fulton1}. We will show in Section 2 that the lowest dimension term of $c(G)\cap i_*s(Z,P(E))$ is $c_g(G)\cap [P(E)]$. We collect these facts into an auxiliary proposition.

\begin{proposition}
For $a\in A^j(X)$, we call $j+d$ the \emph{total degree} of the monomial $at^d\in A_*(X)[t]$. We have:
\begin{enumerate}[(a)]
\item the term of $P(t)$ of lowest total degree is the minimal degree polynomial in $A_*(X)[t]$ representing the top dimension part of $i_*[Z]$, which is of total degree $\codim(Z)$,
\item the term of $P(t)$ of highest total degree is the minimal degree polynomial representing $c_g(G)\cap [P(E)]$, which is of total degree $g$, and this is the same as the term of highest total degree of the denominator $Q(t)$.
\end{enumerate}
\end{proposition}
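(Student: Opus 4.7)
The plan is to work directly with the codimension grading on $A_*(P(E))$. The defining relation $t^e + c_1(E)t^{e-1} + \ldots + c_e(E)$ is homogeneous of total degree $e$ (taking $\deg t = 1$ and $\deg c_i(E) = i$), so the grading of $A_*(X)[t]$ by total degree descends to $A_*(P(E))$ and agrees there with the codimension grading. Consequently the minimal degree polynomial representing any class in $A_*(P(E))$ decomposes uniquely into pieces of pure total degree, each corresponding to the codimension $c$ part of the underlying class. It will therefore suffice to identify the codimension $\codim(Z)$ and codimension $g$ components of $c(G) \cap i_*s(Z,P(E))$ and to verify that no component of codimension strictly greater than $g$ appears.

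For part (a), the Segre class $s(Z,P(E)) \in A_*(Z)$ has top-dimensional part $[Z]$ by \cite[\S 4.1]{fulton1}, so $i_*s(Z,P(E))$ has top-dimensional part $i_*[Z]$ of codimension $\codim(Z)$ in $P(E)$. Since $c(G) = 1 + c_1(G) + \ldots$ starts with $1$, the top-dimensional component of $c(G) \cap i_*s(Z,P(E))$ is still $i_*[Z]$, giving the minimal polynomial of total degree $\codim(Z)$ as the lowest-total-degree piece of $P(t)$.

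For part (b), I would combine the projection formula $c(G) \cap i_*s(Z,P(E)) = i_*\bigl(c(G|_Z) \cap s(Z,P(E))\bigr)$ with Fulton's localized top Chern class construction \cite[Example 14.1.1]{fulton1}, which identifies the dimension $\dim P(E) - g$ component of $c(G|_Z) \cap s(Z,P(E))$ as the class whose pushforward to $P(E)$ is $c_g(G) \cap [P(E)]$. To rule out smaller-dimensional contributions I would pass to the blowup $\pi : \tilde Y \to P(E)$ along $Z$, with exceptional divisor $D$; the pullback section $\pi^*s$ factors through $\pi^*G(-D)$, yielding a section $\tilde s$ of that bundle. After principalizing (iterating blowups if necessary) $\tilde s$ is nowhere vanishing along $D$, which splits $\pi^*G|_D \cong O(D)|_D \oplus Q$ with $\rank Q = g-1$; the resulting blowup expression $c(G|_Z) \cap s(Z,P(E)) = \eta_*\bigl(c(Q) \cap [D]\bigr)$, where $\eta = \pi|_D$, then exhibits the class as concentrated in dimensions between $\dim P(E) - g$ and $\dim Z$. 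Finally, $Q(t)$ represents $c(G) \cap [P(E)] = \sum_{k=0}^{g} c_k(G) \cap [P(E)]$, a direct codimensional decomposition whose top piece $c_g(G) \cap [P(E)]$ agrees with the highest-total-degree piece of $P(t)$.

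The main obstacle will be this dimensional vanishing. The projection formula and the refined top Chern class yield the codimension-$g$ piece cleanly, but ruling out higher-codimension pieces of $c(G) \cap i_*s(Z,P(E))$ requires the principalization step above, or equivalently a deformation to the normal cone reducing to the regular embedding case, where the exact sequence $0 \to N_{Z/P(E)} \to G|_Z \to \mathrm{excess} \to 0$ directly yields $c(G|_Z) \cap s(Z,P(E)) = c(\mathrm{excess}) \cap [Z]$ with dimensions in the required range.
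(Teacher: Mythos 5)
Your identification of the two extreme terms matches the paper's: part (a) is the same observation that $c(G)$ has constant term $1$ and the leading term of a Segre class is the top-dimensional cycle of $Z$, and for part (b) your appeal to the localized top Chern class of \cite[\S 14.1]{fulton1} is the same fact the paper uses (it phrases it as the intersection product $P(E)\cdot_G P(E)$, reduced to $c_g(G)\cap[P(E)]$ by moving the section $s$ to the zero section within its rational equivalence class). Where you genuinely diverge is on the crucial bound that $c(G)\cap i_*s(Z,P(E))$ has no components of codimension $>g$. The paper (Lemma 2.5) gets this by writing $c(i^*G)\cap s(Z,P(E)) = r_*\bigl(c(\xi)\cap[P(C_ZP(E)\oplus 1)]\bigr)$ with $\xi$ the rank-$g$ universal quotient bundle on $P(i^*G\oplus 1)$ and $P(C_ZP(E)\oplus 1)$ pure of dimension $\dim P(E)$; you get it from the blowup of $P(E)$ along $Z$, writing the class as $\eta_*\bigl(c(Q)\cap[D]\bigr)$ with $\rank Q = g-1$ and $D$ pure of dimension $\dim P(E)-1$. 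Both exploit purity plus a rank count, and both are standard; your route is arguably more concrete. Two corrections to your version: no iterated ``principalization'' is needed --- after a single blowup along $Z$ the inverse image ideal sheaf $\IIs_Z\cdot\OOs_{\tilde Y}=\OOs(-D)$ is already invertible, so the induced section of $\pi^*G(-D)$ is nowhere vanishing on all of $\tilde Y$ and the exact sequence $0\to \OOs(D)|_D\to \pi^*G|_D\to Q\to 0$ (Whitney suffices; you do not get an actual splitting) comes for free. On the other hand, your closing fallback of ``deformation to the normal cone reducing to the regular embedding case'' should be dropped: $Z$ need not be regularly embedded and there is no such reduction in general, so the blowup computation is the argument, not an optional shortcut. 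Finally, your preliminary remark that the defining relation is homogeneous for the total-degree grading, so that minimal representatives decompose by codimension, is a useful point the paper leaves implicit; note that one still needs $g<e$ (in force throughout the paper) for the degree-$\leq g$ representative to be the minimal one.
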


By using facts about the extension of reflexive sheaves, we note that $\hat{G}$ is unique up to isomorphism, and show that such a bundle and section always exist at least when $G$ splits as a sum of line bundles. Additionally, every closed subscheme of $P(E)$ can be expressed as the zero scheme of a section of a bundle that splits into a sum of line bundles.

\begin{proposition}
Consider a surjection of bundles on $X$, $F\rightarrow E$ and let $G$ be any bundle on $P(E)$ with section $s$. Suppose that the rank $e$ of $E$ is $\geq 2$.
\begin{enumerate}[(a)]
\item Any bundle $\hat{G}$ with section $\hat{s}$ satisfying conditions (i) and (ii) of Theorem 1.2 is unique up to isomorphism.
\item If $G$ splits as a sum of line bundles then such a $\hat{G}$ and $\hat{s}$ exist.
\item Every closed subscheme $Z$ of $P(E)$ is the zero scheme of a section of a direct sum of line bundles.
\item If $Z$ is the zero scheme of a section of a bundle as in (b), (c), and if the rank of that bundle is $< e$, then Theorem 1.2 may always be applied.
\end{enumerate}
\end{proposition}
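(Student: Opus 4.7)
The plan for (a) is to locate the indeterminacy locus of $\phi$ inside $P(F)$ and then invoke reflexivity. The rational map $\phi$ is regular exactly on the complement of $P(K)$, where $K := \ker(F\to E)$; since $\rank(K) = \rank(F) - e$, the closed subscheme $P(K)\subset P(F)$ has codimension $e \geq 2$. On a smooth variety, a vector bundle (and each of its global sections) is reflexive, hence determined by its restriction to the complement of any codimension-$\geq 2$ closed subset. Since conditions (i) and (ii) pin down $\hat{G}\big|_U$ and $\hat{s}\big|_U$ up to the canonical identification, this determines $(\hat{G}, \hat{s})$ up to isomorphism.

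For (b), I would use the classification of line bundles on a projective bundle to write each summand of $G = \bigoplus_{i=1}^g L_i$ as $L_i \cong \pi_E^* M_i \otimes O_{P(E)}(n_i)$ with $M_i$ a line bundle on $X$ and $n_i \in \ZZ$, and then set $\hat{G} := \bigoplus_i \pi_F^* M_i \otimes O_{P(F)}(n_i)$. Since $\phi$ is a map over $X$ and the construction of $\phi$ from $F \twoheadrightarrow E$ gives $(\phi\big|_U)^* O_{P(E)}(1) \cong O_{P(F)}(1)\big|_U$, condition (i) is automatic. For the section, when $n_i \geq 0$ the projection formula identifies $H^0(P(E), L_i)$ with $H^0(X, M_i \otimes \Sym^{n_i} E^*)$, and the natural injection $\Sym^{n_i} E^* \hookrightarrow \Sym^{n_i} F^*$ induced by $F \twoheadrightarrow E$ carries $s_i$ to an extension $\hat{s}_i \in H^0(P(F), \hat{L}_i)$; when $n_i < 0$ (so $s_i = 0$ automatically, using $e \geq 2$) take $\hat{s}_i = 0$. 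For (c), I would choose $N \gg 0$ so that the twisted ideal sheaf $\mathcal{I}_Z \otimes O_{P(E)}(N)$ is globally generated; a tuple of global generators $f_1,\ldots,f_r$ is exactly a section of the split bundle $O_{P(E)}(N)^{\oplus r}$ whose zero scheme equals $Z$.

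Part (d) is then a direct assembly: (c) presents any $Z \subset P(E)$ as the zero scheme of a section of a direct sum of line bundles, (b) supplies the compatible data $(\hat{G},\hat{s})$ on $P(F)$, and the rank hypothesis $g < e$ allows Theorem~1.2 to be invoked. The hardest step will be a clean verification of condition (ii) in part (b): one has to check that the $\hat{s}_i$ defined abstractly via the symmetric-power inclusion on $X$ actually corresponds, under the isomorphism $\hat{L}_i\big|_U \cong (\phi\big|_U)^* L_i$ of (i), to the pullback section $(\phi\big|_U)^* s_i$. This is essentially a functoriality statement for the relative $\Proj$ construction and its tautological sheaves, but since (ii) is exactly this on-$U$ compatibility, it must be spelled out carefully; the rest of the argument reduces to standard facts about reflexive sheaves on smooth varieties, the structure of $\operatorname{Pic}(P(E))$, and Serre's theorem on eventually globally generated twists of coherent sheaves.
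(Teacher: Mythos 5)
Your parts (a), (b), and (d) are essentially sound, but part (c) has a genuine gap: you invoke Serre's theorem to make $\IIs_Z\otimes \OOs_{P(E)}(N)$ globally generated for $N\gg 0$, and this requires $\OOs_{P(E)}(1)$ to be ample on $P(E)$. It is only \emph{relatively} ample over $X$, and it fails to be ample whenever $\dim X>0$: for trivial $E$ one has $P(E)=X\times\PP^{e-1}$ and $\OOs_{P(E)}(1)$ is pulled back from the second factor, hence trivial along the horizontal directions. Concretely, take $X=\PP^1$, $E=\OOs_X^{\oplus 2}$, and $Z$ a point of $\PP^1\times\PP^1$: every global section of $\OOs_{P(E)}(N)$ vanishing at $Z$ vanishes along the entire ruling through $Z$, so $\IIs_Z\otimes\OOs_{P(E)}(N)$ is not globally generated for any $N$. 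The paper avoids this by quoting Kleiman's theorem (Hartshorne, Exercise III.6.8(b)): on a noetherian, integral, separated, locally factorial scheme every coherent sheaf is a quotient of a finite direct sum of line bundles, where the line bundles are allowed to vary rather than being powers of a fixed $\OOs(1)$. Surjecting such a sum onto the dual of a bundle cutting out $Z$ (or directly onto $\IIs_Z$, then composing with $\IIs_Z\hookrightarrow\OOs_{P(E)}$ and dualizing) produces the required section of a split bundle with zero scheme exactly $Z$, with no ampleness hypothesis. Your argument can be salvaged if one additionally assumes $X$ quasi-projective, by twisting with an honestly ample line bundle on $P(E)$ rather than $\OOs_{P(E)}(N)$, but the paper makes no such assumption.

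The remaining parts compare as follows. Your (a) is the paper's argument: $\codim P(K)=e\geq 2$ in $P(F)$ together with the extension and uniqueness properties of reflexive sheaves (the paper cites Hassett--Kov\'acs for these). Your (b) is correct but takes a genuinely different route: the paper simply defines $\hat{\GGs}:=j_*(\phi\big|_U)^*\GGs$ and $\hat{s}:=j_*(\phi\big|_U)^*s$, proving that this pushforward is invertible on each line summand by extending the corresponding Weil divisor by closure, so that conditions (i) and (ii) hold essentially by construction. You instead use $\operatorname{Pic}(P(E))\cong\operatorname{Pic}(X)\times\ZZ$ and the identification of $H^0(P(E),p^*M\otimes\OOs_{P(E)}(n))$ with $H^0(X,M\otimes\Sym^n\EEs^\vee)$ to write $\hat{G}$ and $\hat{s}$ explicitly via $\Sym^n\EEs^\vee\hookrightarrow\Sym^n\FFs^\vee$; this is more concrete (it literally recovers ``extend the equations to the new variables'' in the absolute case) but leaves the functoriality verification of (ii) that you rightly flag as the delicate step, and by the uniqueness in (a) your construction must in any case agree with the paper's.
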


So for instance, if one is able to choose fewer than $e$ effective Cartier divisors on $P(E)$ whose scheme-theoretic intersection is $Z$, then this is a situation where we may always successfully apply Theorem 1.2, taking $G$ to be the sum of line bundles corresponding to the chosen divisors.

The case where $e = 1$ is trivial in the sense that the Segre zeta function becomes unnecessary. When $e = 1$, $P(E) = X$, and any rational map $P(F)\dashrightarrow P(E)$ commuting with the projections to $X$ extends to the projection map $q: P(F)\rightarrow X$. So if we had a closed subscheme $Z$ of $P(E) = X$, the subscheme $q^{-1}(Z)$ of $P(F)$ would be the canonical choice for the cone over $Z$ in $P(F)$, and we can use $q$ to directly relate our Segre classes: $q^*s(Z, P(E)) = s(q^{-1}(Z), P(F))$ \cite[Proposition 4.2 (b)]{fulton1}.

Finally, we may reduce the ambient space dimension of the projective bundle we are working with if given the existence of a sufficiently transverse codimension one subbundle of $E$.

\begin{proposition}
Suppose $l: E^\prime\hookrightarrow E$ is a subbundle of rank $e^\prime = e - 1$, and suppose $g < e^\prime$. Then $l$ induces an embedding $l: P(E^\prime)\hookrightarrow P(E)$, and $l^* G$ and $l^*s$ are a bundle and section pair on $P(E^\prime)$ which cut out a subscheme $i^\prime: Z^\prime\hookrightarrow P(E^\prime)$. If $P(E^\prime)$ does not contain any of the supports of the irreducible components of the normal cone of $Z$ in $P(E)$, then in this case $$\zeta_{G,s}(t) = \zeta_{l^*G, l^*s}(t).$$
\end{proposition}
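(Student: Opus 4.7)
The plan is to reduce the equality $\zeta_{G,s}(t)=\zeta_{l^*G,l^*s}(t)$ to a coincidence of polynomial representatives and then verify that coincidence via the Gysin pullback along the codimension-one regular embedding $l: P(E')\hookrightarrow P(E)$. By Proposition 1.3 both $P(t)$ and $Q(t)$ have total degree at most $g$, hence $t$-degree at most $g$; since $g<e'<e$, neither polynomial is touched by the defining relations of the presentations of $A_*(P(E))$ or $A_*(P(E'))$, and the same element of $A_*(X)[t]$ serves as the minimal representative in either Chow group. It therefore suffices to prove the class identities
\[
c(l^*G)\cap[P(E')] = l^*\bigl(c(G)\cap[P(E)]\bigr), \qquad c(l^*G)\cap i'_*s(Z',P(E')) = l^*\bigl(c(G)\cap i_*s(Z,P(E))\bigr),
\]
since under the presentations $l^*$ sends $t\mapsto t$ and is the identity on $A^*(X)$, hence acts as the identity on any polynomial representative of $t$-degree $<e'$ (the Whitney sum formula applied to $0\to E'\to E\to E/E'\to 0$ factors $t^e+c_1(E)t^{e-1}+\cdots+c_e(E)=(t+c_1(E/E'))(t^{e'}+c_1(E')t^{e'-1}+\cdots+c_{e'}(E'))$, showing that the relevant ideals are nested and the quotient map is literally the identity on polynomials of $t$-degree below $e'$).

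The denominator equality follows immediately from the naturality $l^*c(G)=c(l^*G)$, the Gysin formula $l^*[P(E)]=[P(E')]$ for the regular embedding $l$, and the projection formula. The numerator equality follows by the same mechanism once I establish the Segre-class pullback identity
\[
l^*\bigl(i_*s(Z,P(E))\bigr) = i'_*s(Z',P(E'))
\]
in $A_*(P(E'))$. This is exactly where the normal cone hypothesis enters: my plan is to deduce this identity from the blowup (or projectivized normal cone) description of Segre classes in \cite[\S 4.1--4.2]{fulton1}, using the transversality condition to identify $\blow_{Z'} P(E')$ with the proper transform of $P(E')$ in $\blow_Z P(E)$ and to show that the exceptional divisors restrict accordingly. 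Equivalently, the hypothesis guarantees that no distinguished subvariety of $Z$ in $P(E)$ is absorbed into the fiber over $Z'=Z\cap P(E')$, which is the standard properness condition needed for such a pullback to hold without residual correction.

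The main obstacle is this Segre-class pullback identity; once in place, the polynomial-level coincidences above immediately give $\zeta_{G,s}(t)=\zeta_{l^*G,l^*s}(t)$. While the underlying intersection-theoretic machinery is well-developed, extracting the cleanest proof requires careful bookkeeping with the normal cones $C_{Z'}P(E')\hookrightarrow l^*C_Z P(E)$ and their projectivizations in order to verify that the transversality hypothesis really does rule out all residual contributions, ensuring that the naive pullback formula holds on the nose.
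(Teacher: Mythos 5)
Your proposal follows essentially the same route as the paper: reduce everything to the identity $l^*\,i_*s(Z,P(E)) = i'_*s(Z',P(E'))$ and observe that, since $g < e' < e$, the minimal polynomial representatives of numerator and denominator have $t$-degree below both presentations' relations and so are unchanged under $l^*$. The pullback identity you flag as ``the main obstacle'' is precisely Lemma 4.1 of the paper (a result of Aluffi and Faber on intersecting Segre classes with a Cartier divisor containing no support of a component of the normal cone), combined with the compatibility of Gysin maps with proper pushforward, so it can be cited directly rather than re-derived by the blow-up bookkeeping you outline.
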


This note is organized as follows. Sections 2 and 3 are dedicated to the proofs of Theorem 1.2, Proposition 1.4, respectively, and Proposition 1.3 will be also be proven in Section 2. In Section 4 we discuss the special situation where the surjection of bundles we are working with splits, $F = E\oplus L$, for some line bundle $L$ on $X$, in which case Theorem 1.2 has a more basic proof. There we also prove Proposition 1.5. In Section 5 we show how our generalization specializes to the Segre zeta function of the absolute case, and as an application of the relative Segre zeta function we derive a Segre zeta function for products of projective spaces.

\textbf{Acknowledgements.} I wish to thank Paolo Aluffi for suggesting this problem, and for his guidance and support.

\section{Proof of the main result}

As in Section 1.2, let $\phi : P(F)\dashrightarrow P(E)$ be a rational map of projective bundles on a smooth variety $X$, induced by a surjection of the underlying bundles. Let $U$ be the open subscheme of $P(F)$ where $\phi$ is a morphism, and let $e = \rank(E), f = \rank(F)$. Following Aluffi \cite{aluffi1}, we say that a bundle $\hat{G}$ on $P(F)$ is \emph{compatible} with a bundle $G$ on $P(E)$ if condition (i) of Section 1.2 is satisfied, that is, if there is an isomorphism $\hat{G}\big|_U \cong (\phi\big|_U)^* G$. Likewise, if $s$ is a section of $G$, then a section $\hat{s}$ of $\hat{G}$ is \emph{compatible} to $s$ if condition (ii) is satisfied. We organize the proof of Theorem 1.2 as a sequence of lemmas.

\begin{lemma}
The hyperplane bundle on $P(F)$, $O_{P(F)}(1)$, is compatible with that of $P(E)$.
\end{lemma}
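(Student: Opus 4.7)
The plan is to derive the compatibility directly from the universal property of $P(E)$ as the projective bundle of lines. Recall that $P(E)$ carries a tautological short exact sequence
$$0 \to O_{P(E)}(-1) \to \pi_E^* E \to Q_E \to 0,$$
where $\pi_E : P(E) \to X$ is the structure map, and likewise for $\pi_F : P(F) \to X$. The strategy is to produce a line subbundle of $\pi_F^* E$ on $U$ using the surjection $F \to E$, and then recognize this subbundle as the one classified by $\phi|_U$.

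Concretely, I would first compose the tautological inclusion $O_{P(F)}(-1) \hookrightarrow \pi_F^* F$ with the pullback $\pi_F^* F \to \pi_F^* E$ of the given surjection, obtaining a morphism
$$\alpha : O_{P(F)}(-1) \to \pi_F^* E.$$
At a point $p \in P(F)$ lying over $x \in X$ and corresponding to a line $\ell \subseteq F_x$, the fiber of $\alpha$ is just the restriction of $F_x \to E_x$ to $\ell$. This vanishes exactly when $\ell$ lies in the kernel of $F_x \to E_x$, which is precisely the locus where $\phi$ is not defined. Hence $\alpha$ is nowhere zero on $U$, so $\alpha|_U$ realizes $O_{P(F)}(-1)|_U$ as a line subbundle of $\pi_F^* E|_U = (\pi_E \circ \phi|_U)^* E$.

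The universal property of $P(E)$, as the parameter space for line subbundles of pulled-back copies of $E$, then produces the unique morphism $U \to P(E)$ along which $O_{P(E)}(-1)$ pulls back to this specified line subbundle; by construction of $\phi$ from the surjection $F \to E$, this morphism is $\phi|_U$ itself. I would therefore conclude
$$(\phi|_U)^* O_{P(E)}(-1) \cong O_{P(F)}(-1)\big|_U,$$
and dualize to obtain condition (i) for the pair $O_{P(F)}(1)$, $O_{P(E)}(1)$.

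I do not expect a real obstacle here; the argument is essentially the tautology that $\phi$ is, by definition, the classifying map for the line subbundle $\alpha\bigl(O_{P(F)}(-1)\bigr) \subseteq \pi_F^* E$ on $U$. The only care required is to keep the conventions straight — namely, that $P(E)$ denotes the bundle of lines so that the universal property concerns line subbundles rather than rank-one quotients, and that the indeterminacy locus of $\phi$ coincides with the vanishing locus of $\alpha$.
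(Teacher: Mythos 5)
Your proof is correct, but it takes a genuinely different route from the paper. The paper argues locally: it covers $X$ by affine opens $W=\Spec(A)$ trivializing both $E$ and $F$, writes $P(E)_W=\Proj(A[x_1,\ldots,x_e])$ and $P(F)_W=\Proj(A[y_1,\ldots,y_f])$, observes that the surjection corresponds to a degree-preserving injection of graded rings, invokes the standard fact (Hartshorne, Proposition II.5.12(c)) that the twisting sheaves pull back to each other on the locus where the induced map of $\Proj$'s is defined, and then glues these local isomorphisms. Your argument is instead global and coordinate-free: you compose the tautological inclusion $O_{P(F)}(-1)\hookrightarrow \pi_F^*F$ with the pulled-back surjection to get $\alpha\colon O_{P(F)}(-1)\to\pi_F^*E$, note that $\alpha$ vanishes precisely on $P(K)$, and identify $\phi\big|_U$ as the classifying map of the line subbundle $\alpha(O_{P(F)}(-1))\subseteq \pi_F^*E\big|_U$, whence $(\phi\big|_U)^*O_{P(E)}(-1)\cong O_{P(F)}(-1)\big|_U$ canonically. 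Your version buys a single canonical isomorphism with no gluing step to verify, and it simultaneously exhibits the indeterminacy locus of $\phi$ as $P(K)$ (a fact the paper asserts separately just after this lemma); the paper's version is more elementary, resting only on the graded-ring description of $\Proj$ and avoiding any reliance on which universal property (subbundles versus quotients) is in force. Your caveat about the ``lines'' convention is exactly the right point of care, and is consistent with the paper's stated convention that $P(E)$ is the bundle of lines in $E$.
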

\begin{proof}
Note that if $\EEs, \FFs$ are the locally free sheaves of sections of $E$, $F$, respectively, then we have $E = \Spec(\Sym(\EEs^\vee))$, $F = \Spec(\Sym(\FFs^\vee))$, and $P(E) = \Proj(\Sym(\EEs^\vee)), P(F) = \Proj(\Sym(\FFs^\vee))$. We can find a cover of $X$ with affine open subschemes small enough so that the restrictions of $\EEs, \FFs$ to each are simultaneously free. Let $W = \Spec(A)$ be one such subscheme.

Then $$P(E)_W := P(E)\big|_W = \Proj(A[x_1,\ldots,x_e]), P(F)_W := P(E)\big|_W = \Proj(A[y_1,\ldots,y_f]).$$ The surjection $F_W \rightarrow E_W$ corresponds to a degree-preserving, injective homomorphism of graded rings, $A[x_1,\ldots,x_e]\hookrightarrow A[y_1,\ldots,y_f]$. This in turn defines the rational map $$\phi\big|_{U\cap P(F)_W}: P(F)_W\dashrightarrow P(E)_W.$$

In this situation, the restrictions of the sheaves of sections of the hyperplane bundles are isomorphic over $U$, that is, $$(\phi\big|_{U\cap P(F)_W})^* \OOs_{P(E)}(1)\big|_{P(E)_W} \cong \OOs_{P(F)}(1)\big|_{U\cap P(F)_W}$$ \cite[Proposition II.5.12 (c)]{hartshorne1}. These local isomorphisms are compatible with restriction and thus glue together to yield an isomorphism globally: $$O_{P(F)}(1)\big|_U \cong (\phi\big|_U)^* O_{P(E)}(1).$$

\end{proof}

Let $K$ be the kernel of $F\rightarrow E$ which is thus a subbundle of $F$. The indeterminancy locus of $\phi$ is then $P(K)\hookrightarrow P(E)$, and we may resolve $\phi$ by blowing up $P(F)$ along $P(K)$. This produces the commutative diagram:

\begin{center}
\begin{tikzcd}
{} & \blow_{P(K)}P(F) \arrow[dr, "\lambda"] & {}\\
P(F) \arrow[ur, leftarrow, "\pi"] \arrow[rr, dashrightarrow, "\phi"] & {} & P(E) \arrow[dl, "p"]\\
{} & X \arrow[ul,  leftarrow, "q"] & {}
\end{tikzcd}
\end{center} 




Here $\pi$ is a proper birational map, $\lambda$ is the induced map to $P(E)$, and $p,q$ are the projections from $P(E), P(F)$ to $X$, respectively. The map $\lambda$ realizes the blow up as a projective bundle over $P(E)$ (see \cite[Proposition 9.3.2]{eisenbud1} for the absolute case) and so it is in particular a flat morphism. This gives us a way to move a class from $A_*(P(E))$ to $A_*(P(F))$. Following Aluffi \cite{aluffi1} once more, we define for a class $\alpha\in A_*(P(E))$ the \emph{join} of $\alpha$ with $P(K)$ to be $$\alpha\vee P(K) := \pi_*\lambda^*\alpha\in A_*(P(F)).$$

\begin{lemma}
Let $d < e$. Then $$(c_1(O_{P(E)}(1))^d\cap [P(E)])\vee P(K) = c_1(O_{P(F)}(1))^d\cap [P(F)].$$
\end{lemma}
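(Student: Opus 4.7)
The plan is to transfer the computation to the blow-up $\tilde{P} := \blow_{P(K)}P(F)$, control the difference between the two hyperplane-bundle pullbacks $\lambda^* O_{P(E)}(1)$ and $\pi^* O_{P(F)}(1)$ there, and then push down to $P(F)$. By Lemma 2.1 together with the fact that $\pi$ restricts to an isomorphism over $U$ with $\lambda = \phi \circ \pi$ on $\pi^{-1}(U)$, these two line bundles agree off the exceptional divisor $\tilde{E}$ of $\pi$. Because $\tilde{P}$ is smooth and $\tilde{E}$ is irreducible (being the projectivization of the normal bundle of the irreducible smooth subvariety $P(K) \subset P(F)$), standard Picard-group considerations give $\pi^* O_{P(F)}(1) \cong \lambda^* O_{P(E)}(1) \otimes O(m\tilde{E})$ for some $m \in \ZZ$. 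Writing $h = c_1(O_{P(F)}(1))$ and $H = c_1(O_{P(E)}(1))$, and retaining $\tilde{E}$ also for its divisor class, this reads $\lambda^* H = \pi^* h - m\tilde{E}$.

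Next, $\lambda$ is flat of relative dimension $f - e$ (it realizes $\tilde{P}$ as a projective bundle over $P(E)$), so $\lambda^*[P(E)] = [\tilde{P}]$. Expanding by the binomial theorem gives
$$\lambda^*(H^d \cap [P(E)]) = \sum_{k=0}^d \binom{d}{k}(-m)^k (\pi^* h)^{d-k}\tilde{E}^k \cap [\tilde{P}].$$
Applying $\pi_*$ and the projection formula, the $k = 0$ term contributes $h^d \cap \pi_*[\tilde{P}] = h^d \cap [P(F)]$, since $\pi$ is proper and birational. This is the claimed right-hand side, so what remains is to show that every term with $k \geq 1$ vanishes after pushforward.

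For $k \geq 1$, iterating the self-intersection formula for the Cartier divisor $\tilde{E}$ shows $\tilde{E}^k \cap [\tilde{P}] = \iota_*(c_1(\iota^* O(\tilde{E}))^{k-1} \cap [\tilde{E}])$, where $\iota: \tilde{E} \hookrightarrow \tilde{P}$. Pushing forward by $\pi$ then lands in the image of $A_{n+f-1-k}(P(K)) \to A_{n+f-1-k}(P(F))$, because $\pi(\tilde{E}) = P(K)$. Since $\dim P(K) = n + f - e - 1$, the group $A_{n+f-1-k}(P(K))$ vanishes whenever $k < e$. The hypothesis $d < e$ forces $k \leq d < e$ for every $k$ in question, so all such pushforwards are zero and only the $k = 0$ contribution survives.

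The main conceptual step is the identification $\lambda^* H - \pi^* h = -m\tilde{E}$ via Lemma 2.1 and the smoothness of $\tilde{P}$; the rank bound $d < e$ is then precisely what saves us from having to compute the integer $m$, since the correction terms all vanish on dimensional grounds. An explicit determination of $m$ would require identifying $\tilde{P}$ as the projective bundle of an extension of $O_{P(E)}(-1)$ by $p^*K$ over $P(E)$, but this additional work turns out to be unnecessary here.
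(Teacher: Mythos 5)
Your argument is correct, and it reaches the conclusion by a genuinely different route in its key middle step. The paper never compares the line bundles $\pi^*O_{P(F)}(1)$ and $\lambda^*O_{P(E)}(1)$ globally on the blow-up: it only records that the two classes $c_1(\pi^*O_{P(F)}(1))^d\cap[\blow_{P(K)}P(F)]$ and $\lambda^*(c_1(O_{P(E)}(1))^d\cap[P(E)])$ agree after restriction to $V=\pi^{-1}(U)$, and then invokes the excision exact sequence $A_*(D)\to A_*(\blow_{P(K)}P(F))\to A_*(V)\to 0$ to conclude that their difference is $l_*\alpha$ for a single class $\alpha\in A_{n+f-1-d}(D)$, which dies under $\pi_*$ because $\dim P(K)=n+f-e-1<n+f-1-d$. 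You instead upgrade the agreement over $V$ to a global identity of line bundles, $\pi^*O_{P(F)}(1)\cong\lambda^*O_{P(E)}(1)\otimes O(m\tilde{E})$, using factoriality of the smooth blow-up and irreducibility of the exceptional divisor, and then expand binomially, killing each correction term $h^{d-k}\cap\pi_*(\tilde{E}^k\cap[\widetilde{P}])$ for $1\le k\le d$ by the same dimension count on $P(K)$. The two proofs share the skeleton (compatibility of the tautological bundles via Lemma 2.1, plus the fact that $P(K)$ is too small to carry the relevant classes when $d<e$), but the mechanisms differ: the excision argument is purely Chow-theoretic and would go through without any factoriality of the blow-up, while your Picard-group step needs $\blow_{P(K)}P(F)$ to be locally factorial (true here, since $X$, hence $P(F)$ and $P(K)$, are smooth) and in exchange yields a more explicit statement, namely the precise divisor-class relation between the two $O(1)$'s up to the integer $m$ --- which, as you rightly observe, never needs to be computed because the rank bound makes all the correction terms vanish regardless.
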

\begin{proof}
Let $V$ denote the complement of the exceptional divisor in $\blow_{P(K)}P(F)$, so $V = \pi^{-1}(U)$. By the previous lemma, the $O(1)$ bundles on $P(F), P(E)$ are compatible, and thus we obtain an isomorphism of bundles on the blow-up $\blow_{P(K)}P(F)$, $$(\lambda^*O_{P(E)}(1))\big|_{V} \cong (\pi\big|_{V})^*(O_{P(F)}(1)\big|_U).$$ In particular, this means these two bundles have the same first Chern class. By the commutativity of the blow-up diagram and by the functorial properties of Chern classes, if $j: V\hookrightarrow \blow_{P(K)}P(F)$ denotes the open immersion, we have $$j^*(c_1(\pi^*O_{P(F)}(1))^d\cap [\blow_{P(K)}P(F)] - \lambda^*(c_1(O_{P(E)}(1))^d\cap [P(E)])) = 0.$$

From Fulton \cite[Proposition 1.8]{fulton1}, the sequence
\begin{center}
\begin{tikzcd}
A_*(D) \arrow[r, "l_*"] & A_*(\blow_{P(K)}P(F)) \arrow[r, "j^*"] & A_*(V) \arrow[r] & 0
\end{tikzcd}
\end{center}
is exact, where $l: D\hookrightarrow \blow_{P(K)}P(F)$ denotes the exceptional divisor. So knowing that $$c_1(\pi^*O_{P(F)}(1))^d\cap [\blow_{P(K)}P(F)] - \lambda^*(c_1(O_{P(E)}(1))^d\cap [P(E)]) \in \ker j^*$$ tells us this class is equal to the pushforward by $l$ of a class $\alpha\in A_{n + f - 1 - d}(D)$.

Thus as $\dim P(K) = n + f - e - 1$, so long as $d < e$, we see that $\pi_*l_* \alpha = 0$. Therefore, we may conclude by the projection formula for Chern classes \cite[Theorem 3.2 (c)]{fulton1}.
\end{proof}

A completely analogous argument establishes:

\begin{lemma}
If $G$ is a rank $g < e$ bundle on $P(E)$, $\hat{G}$ a compatible bundle to $G$ on $P(F)$, then $(c(G)\cap[P(E)])\vee P(K) = c(\hat{G})\cap [P(F)]$.
\end{lemma}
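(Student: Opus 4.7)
The plan is to carry over the argument of Lemma 2.2 essentially verbatim, replacing the single monomial $c_1(O_{P(E)}(1))^d$ by the total Chern class $c(G)$. First I would unpack compatibility: the isomorphism $\hat{G}\big|_U \cong (\phi\big|_U)^* G$ pulls back along $\pi\big|_V$ to an isomorphism $\pi^*\hat{G}\big|_V \cong \lambda^* G\big|_V$ on $V := \pi^{-1}(U)$, since $\lambda\big|_V$ factors as $\phi\big|_U \circ \pi\big|_V$. Taking total Chern classes and capping with $[\blow_{P(K)} P(F)]$ shows that
$$\pi^*c(\hat{G}) \cap [\blow_{P(K)} P(F)] \;-\; \lambda^*(c(G) \cap [P(E)])$$
restricts to zero on $V$, hence lies in the kernel of $j^*: A_*(\blow_{P(K)} P(F)) \to A_*(V)$.

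Next I would invoke Fulton's excision sequence exactly as in Lemma 2.2 to write this difference as $l_*\alpha$ for some class $\alpha$ on the exceptional divisor $D$. To conclude by the projection formula it will suffice to show that $\pi_* l_* \alpha = 0$, and this is where the rank hypothesis $g < e$ enters. The class $c(\hat{G}) \cap [P(F)]$ has nonzero components only in dimensions $n+f-1, n+f-2, \ldots, n+f-1-g$, and the same bound holds for $\lambda^*(c(G) \cap [P(E)])$ on the blow-up, so every nonzero dimensional piece of $\alpha$ has dimension at least $n+f-1-g$. Since $g < e$, this strictly exceeds $\dim P(K) = n+f-e-1$.

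Because $\pi\circ l$ factors through the inclusion $P(K) \hookrightarrow P(F)$, each piece of $\pi_* l_* \alpha$ is supported on $P(K)$ but lies in a dimension strictly greater than $\dim P(K)$, so $\pi_* l_* \alpha = 0$. The projection formula $\pi_*(\pi^* c(\hat{G}) \cap [\blow_{P(K)} P(F)]) = c(\hat{G}) \cap [P(F)]$ then finishes the argument. I do not expect any serious obstacle here: the proof is the direct analogue of Lemma 2.2, with the condition $d < e$ replaced by $g < e$; the only points worth watching are that the compatibility isomorphism correctly pulls up to the blow-up via the factorization of $\lambda$ through $\phi$, and that the extremal (lowest-dimensional) term $c_g(\hat{G}) \cap [P(F)]$ still satisfies the necessary dimensional strict inequality, which reduces exactly to $g < e$.
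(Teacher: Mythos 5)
Your proof is correct and is precisely the ``completely analogous argument'' that the paper invokes without writing out: you repeat the excision-sequence argument of Lemma 2.2 with $c_1(O_{P(E)}(1))^d$ replaced by $c(G)$, using compatibility of $\hat{G}$ with $G$ (pulled up to the blow-up via $\lambda\big|_V=\phi\big|_U\circ\pi\big|_V$) in place of Lemma 2.1, and the rank bound $g<e$ in place of $d<e$ to kill the contribution supported on $P(K)$. The only cosmetic point is that the class whose dimensional components bound those of $\alpha$ should be $\pi^*c(\hat{G})\cap[\blow_{P(K)}P(F)]$ rather than $c(\hat{G})\cap[P(F)]$, which changes nothing in the argument.
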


The isomorphism $A_*(P(E))\cong A_*(X)[t]/(t^e + c_1(E)t^{e - 1} + \ldots + c_e(E))$ is defined so that the expression $\alpha t^d$ corresponds to $c_1(O_{P(E)}(1))^d\cap p^*\alpha$, for any $\alpha\in A_*(X)$. Likewise for $A_*(P(F))$. This, the previous lemma, the commutativity of the blow-up diagram, and the projection formula for Chern classes together gives us the following:

\begin{lemma}
If $d < e$, then for any $\alpha_j\in A_*(X)$, $(\alpha_d t^d + \ldots + \alpha_0)\vee P(K) = \alpha_d t^d + \ldots + \alpha_0$.
\end{lemma}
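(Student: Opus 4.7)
The plan is to exploit linearity of the join $\pi_*\lambda^*$ to reduce to a single monomial $\alpha t^j$ with $j \leq d$, and then by a further linearity step to the case $\alpha = [Y]$ for a subvariety $Y \subseteq X$ of dimension $k$. The goal then becomes showing $(c_1(O_{P(E)}(1))^j \cap p^*[Y]) \vee P(K) = c_1(O_{P(F)}(1))^j \cap q^*[Y]$.

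First I would use the naturality of Chern classes and the commutativity $p\lambda = q\pi$ to rewrite the left-hand side as $\pi_*\big(c_1(\lambda^* O_{P(E)}(1))^j \cap \pi^* q^*[Y]\big)$. Writing $c_1(\lambda^* O_{P(E)}(1))^j = c_1(\pi^* O_{P(F)}(1))^j + \big(c_1(\lambda^* O_{P(E)}(1))^j - c_1(\pi^* O_{P(F)}(1))^j\big)$, the first piece yields the desired class immediately: the projection formula for Chern classes together with $\pi_* \pi^* = \operatorname{id}$ (valid because $\pi$ is proper birational) gives $\pi_*(c_1(\pi^* O_{P(F)}(1))^j \cap \pi^* q^*[Y]) = c_1(O_{P(F)}(1))^j \cap q^*[Y]$. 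For the remainder I would mimic the argument of Lemma 2.2 in a relative form: by Lemma 2.1 the two line bundles agree on $V$, so the Chern class difference capped with $\pi^* q^*[Y]$ lies in $\ker j^*$, hence by the excision exact sequence equals $l_*\delta$ for some $\delta \in A_{k+f-1-j}(D)$ supported on $D \cap \pi^{-1}(q^{-1}(Y))$. Its pushforward by $\pi$ is then supported on $(q|_{P(K)})^{-1}(Y) \subseteq P(F)$, a subvariety of dimension $k+f-e-1$; under the hypothesis $j < e$ we have $k+f-1-j > k+f-e-1$, forcing the pushforward to vanish.

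The main obstacle I anticipate is getting the dimension count right. The crude bound $\dim P(K) = n+f-e-1$ on the support of the pushforward would only establish vanishing when $j < k+e-n$, a condition that depends on $k$ and fails when $\dim Y < n$. The reduction to the case $\alpha = [Y]$, which localizes the support of $\pi_* l_*\delta$ to $(q|_{P(K)})^{-1}(Y)$ rather than all of $P(K)$, is precisely what is needed to recover the uniform bound $j < e$ for arbitrary $\alpha \in A_*(X)$.
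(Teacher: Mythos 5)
Your proof is correct, but it takes a genuinely different route from the paper's. The paper disposes of this lemma in one line by factoring the monomial: since $\alpha t^d$ corresponds to $c_1(O_{P(E)}(1))^d\cap p^*\alpha = p^*\alpha\cdot(c_1(O_{P(E)}(1))^d\cap[P(E)])$ and $\lambda^*p^* = \pi^*q^*$ by commutativity of the blow-up diagram, the projection formula pulls the factor $q^*\alpha$ outside $\pi_*$, reducing the claim immediately to Lemma 2.2 applied to the bare power $t^d$; no new dimension count is needed and the bound $d<e$ is inherited verbatim. You instead re-run the excision-plus-dimension-count argument of Lemma 2.2 separately for each monomial $\alpha t^j$ with $\alpha=[Y]$, and your key insight --- that the crude bound $\dim P(K)=n+f-e-1$ only gives $j<k+e-n$, so one must localize the error class to $D\cap\pi^{-1}(q^{-1}(Y))$ and push forward into $(q\big|_{P(K)})^{-1}(Y)$, of dimension $k+f-e-1$, to recover the uniform bound $j<e$ --- is exactly the right fix and is genuinely necessary on your route. (Two small points of hygiene: the support claim is cleanest if you apply the excision sequence to the closed subscheme $\pi^{-1}(q^{-1}(Y))$ rather than to all of $\blow_{P(K)}P(F)$, and $\pi^*q^*[Y]$ should be read as $(q\pi)^*[Y]=\lambda^*p^*[Y]$, since $\pi$ itself is not flat while $q\pi=p\lambda$ is.) The trade-off: the paper's argument is shorter and cleanly modular over Lemma 2.2, while yours is more self-contained and makes explicit a subtlety that the paper's terse citation of ``the previous lemma and the projection formula'' leaves entirely implicit.
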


Next, assuming we have a rank $g$ bundle $G$ on $P(E)$, a section $s$ of $G$, and by setting $i: Z\hookrightarrow P(E)$ to be the zero scheme of $s$, we show:

\begin{lemma}
The class $$c(G)\cap i_*s(Z,P(E))\in A_*(X)[t]/(t^e + c_1(E)t^{e-1} + \ldots + c_{e}(E))$$ can be represented by a polynomial in $A_*(X)[t]$ of degree $\leq g$.
\end{lemma}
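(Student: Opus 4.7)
The plan is a codimension count for $c(G)\cap i_*s(Z,P(E))$, exploiting that the zero-scheme structure on $Z$ relates $s(Z,P(E))$ to a class on the projective completion of $G|_Z$, which is a bundle of relative dimension $g$ over $Z$.

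First, by the projection formula, $c(G)\cap i_*s(Z,P(E)) = i_*\bigl(c(G|_Z)\cap s(Z,P(E))\bigr)$. Since $Z$ is the zero scheme of a section of $G$, the normal cone $C := C_Z P(E)$ sits inside $G|_Z$ as a closed subcone, so $P(C\oplus\OOs_Z) \hookrightarrow P(G|_Z\oplus\OOs_Z)$. Let $\pi : P(G|_Z \oplus \OOs_Z) \to Z$ be the projection, $\xi = c_1(\OOs(1))$, and $Q$ the rank $g$ universal quotient bundle. Fulton's formula presents $s(Z,P(E))$ as the $\pi$-pushforward of $\sum_{i\ge 0}\xi^i \cap [P(C\oplus\OOs_Z)]$; combined with the projection formula, the Euler sequence $0\to \OOs(-1)\to \pi^*(G|_Z\oplus\OOs_Z)\to Q\to 0$ (which yields $c(\pi^*G|_Z) = (1-\xi)c(Q)$), and the telescoping identity $(1-\xi)\sum_{i\ge 0}\xi^i = 1$ (a valid operator identity on any Chow class since the series becomes a finite sum by dimension), this simplifies to
$$c(G|_Z)\cap s(Z,P(E)) = \pi_*\bigl(c(Q)\cap [P(C\oplus\OOs_Z)]\bigr).$$

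Next, $c(Q) = 1 + c_1(Q) + \cdots + c_g(Q)$ has components in codimensions $0,1,\ldots,g$, while $[P(C\oplus\OOs_Z)]$ is a class of dimension $\dim P(E)$ (since $\dim C_Z P(E) = \dim P(E)$). Hence $c(Q)\cap [P(C\oplus\OOs_Z)]$ has components in dimensions $\dim P(E) - j$ for $0 \le j \le g$, and since proper pushforward preserves dimension, applying $\pi_*$ and then $i_*$ yields a class supported in codimensions at most $g$ in $P(E)$.

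Finally, in the presentation $A_*(P(E)) \cong A_*(X)[t]/(t^e + c_1(E)t^{e-1}+\cdots+c_e(E))$, a basis monomial $\alpha t^d$ with $\alpha \in A^j(X)$ has codimension $j+d$ in $P(E)$, so any class of pure codimension $c$ admits a polynomial representation of degree at most $c$ in $t$. Combined with the preceding bound, this delivers a polynomial of degree at most $g$ representing $c(G)\cap i_*s(Z,P(E))$, as required. The main obstacle is the first-step identification via the Euler sequence and telescoping; once that is in hand, the remainder is direct dimensional bookkeeping.
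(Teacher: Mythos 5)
Your proposal is correct and follows essentially the same route as the paper: both realize $c(G|_Z)\cap s(Z,P(E))$ as the pushforward of $c(Q)\cap[P(C\oplus\OOs_Z)]$ from the projective completion of $G|_Z$ via the tautological exact sequence (Whitney formula plus projection formula), then use that $P(C\oplus\OOs_Z)$ is pure of dimension $\dim P(E)$ and $Q$ has rank $g$ to bound the codimension by $g$. The only difference is cosmetic notation (your $Q$ is the paper's $\xi$) and the direction in which the identity is derived.
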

\begin{proof}
The key point to show here is that the class $c(G)\cap i_*s(Z,P(E))$ has no homogeneous components of codimension $> g$. This is Fulton \cite[Example 6.1.6]{fulton1}, as the diagram (1) is the setup for intersecting the image of $P(E)$ by $s$ with the zero section embedding of $P(E)$ in $G$; the zero section embedding is a regular embedding with normal bundle $G$. Let $N = i^* G$.

To see why this is the case in more detail, note that by Fulton \cite[B.5.7]{fulton1}, there is an exact sequence $$0 \rightarrow O_{P(N\oplus 1)}(-1) \rightarrow r^*(N\oplus 1) \rightarrow \xi \rightarrow 0$$ of bundles on $P(N\oplus 1)$, where $r: P(N\oplus 1)\rightarrow Z$ is the projection map, and $\xi$ is the universal rank $g$ quotient bundle on $P(N\oplus 1)$.

The normal cone of $Z$ in $P(E)$, $C:= C_Z P(E)$ \cite[Chapter 4]{fulton1} embeds into $N$, and so we may view its projective completion $P(C\oplus 1)$ as a closed subscheme of $P(N\oplus 1)$. Then, by the sum formula for Chern classes we have $$r_*(c(\xi)\cap [P(C\oplus 1)]) = r_*(c(r^*(N\oplus 1))s(O_{P(N\oplus 1)}(-1))\cap [P(C\oplus 1)]).$$

The expression $$c(r^*(N\oplus 1))s(O_{P(N\oplus 1)}(-1))\cap [P(C\oplus 1)]$$ is equal to $$c(r^*(N\oplus 1))\cap (\sum_{j\geq 0} c_1(O_{P(N\oplus 1)}(1))^j\cap [P(C\oplus 1)]).$$ So by the projection formula, $$r_*(c(\xi)\cap [P(C\oplus 1)]) = c(N\oplus 1)\cap r_*(\sum_{j\geq 0} c_1(O_{P(N\oplus 1)}(1))^j\cap [P(C\oplus 1)]).$$

Finally, $$r_*(\sum_{j\geq 0} c_1(O_{P(N\oplus 1)}(1))^j\cap [P(C\oplus 1)]) = s(Z,P(E))$$ by definition of the Segre class, and $c(N\oplus 1) = c(N)$, so we have $$r_*(c(\xi)\cap [P(C\oplus 1)]) = c(N)\cap s(Z,P(E)).$$

Since $X$ and thus $P(E)$ are varieties, $P(C\oplus 1)$ is of pure dimension $\dim(P(E))$ \cite[B.6.6]{fulton1}. So as $\xi$ has rank $g$, it is clear that $c(N)\cap s(Z,P(E))$ has no terms of dimension $< \dim P(E) - g$. Thus the class $c(G)\cap i_* s(Z,P(E))\in A_*(P(E))$ has no terms of codimension $> g$. Therefore, via the isomorphism $$A_*(P(E))\cong A_*(X)[t]/(t^e + c_1(E)t^{e - 1} + \ldots + c_e(E)),$$ the class $c(G)\cap i_* s(Z,P(E))$ may be represented by a polynomial in $t$ of degree $\leq g$.
\end{proof}

Note that the term of codimension $g$ of $c(G)\cap i_* s(Z, P(E))$ is actually $i_* P(E)\cdot_G P(E)$, the pushforward of the intersection product of $P(E)$ with itself, where here one $P(E)$ is the image by $s$ of $P(E)$ and the other the image of $P(E)$ by the zero section embedding $s_0: P(E)\hookrightarrow G$ \cite[Proposition 6.1 (a)]{fulton1}. The intersection product respects rational equivalence, and so since the image of $P(E)$ by $s$ is rationally equivalent to the image of $P(E)$ by $s_0$, so we may assume that both $P(E)$ are in fact the image by $s_0$ of $P(E)$, showing that the codimension $g$ term of $c(G)\cap i_* s(Z, P(E))$ must be the codimension $g$ term of $c(G)\cap i_*s(P(E), P(E))$, which is $c_g(G)\cap [P(E)]$. This takes care of part (b) of Proposition 1.3. 

Suppose now that we have $\hat{G}, \hat{s},$ and $\hat{Z}$ also defined as in Theorem 1.2. The join operation allows us to express the class $\hat{i}_* s(\hat{Z},P(F))$ in terms of $i_*s(Z,P(E))$. In particular we have the following \cite[Corollary 4.5]{aluffi1}.

\begin{lemma}
If $g < e$, $$\hat{i}_* s(\hat{Z},P(F)) = s(\hat{G})\cap ((c(G)\cap i_*s(Z,P(E)))\vee P(K)),$$ where here $s(\hat{G}) = c(\hat{G})^{-1}$ is the Segre class of $\hat{G}$.
\end{lemma}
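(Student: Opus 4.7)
The plan is to prove the equivalent statement
\[
c(\hat{G})\cap \hat{i}_*s(\hat{Z},P(F)) = (c(G)\cap i_*s(Z,P(E)))\vee P(K),
\]
from which the lemma follows by capping both sides with $s(\hat{G}) = c(\hat{G})^{-1}$. This mirrors Aluffi's proof of Corollary 4.5 in \cite{aluffi1}; the new feature of the relative setting is controlling the contribution of the indeterminacy locus $P(K)$.

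Unwinding the join as $\pi_*\lambda^*$ and using that $\lambda$ is flat (being a projective bundle over $P(E)$ by \cite[Proposition 9.3.2]{eisenbud1}), flat pullback of Segre classes \cite[Proposition 4.2(b)]{fulton1} combined with the naturality of Chern classes gives
\[
\lambda^*(c(G)\cap i_*s(Z,P(E))) = c(\lambda^*G)\cap \tilde{i}_*s(\tilde{Z},\blow_{P(K)}P(F)),
\]
where $\tilde{Z} := \lambda^{-1}(Z)$ and $\tilde{i}$ is its inclusion. By the compatibility conditions (i) and (ii), restricting to $V = \pi^{-1}(U)$ identifies $(\pi^*\hat{G}, \pi^*\hat{s})|_V$ with $(\lambda^*G, \lambda^*s)|_V$, so in particular $\tilde{Z}\cap V = \pi^{-1}(\hat{Z})\cap V$. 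Pushing forward by $\pi$ using the birational invariance of Segre classes \cite[Proposition 4.2(a)]{fulton1} along with the projection formula for Chern classes would then yield the target identity, modulo classes supported on the exceptional divisor $D$ and its image $P(K)$ inside $P(F)$.

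The main obstacle, and the place where the hypothesis $g<e$ is essential, is controlling these correction terms: the schemes $\tilde{Z}$ and $\pi^{-1}(\hat{Z})$ may disagree over $D$, and the bundles $\lambda^*G$ and $\pi^*\hat{G}$ are only matched on $V$. I would argue exactly as in the proof of Lemma 2.2: the excision sequence of \cite[Proposition 1.8]{fulton1} places any such discrepancy in the image of $A_*(P(K))\to A_*(P(F))$, and since $\dim P(K) = n+f-e-1$, any class in this image vanishes in every codimension strictly less than $e$. By Lemma 2.5, both sides of the join identity lie in codimensions at most $g$, and $g<e$, so all correction terms vanish in the range of interest. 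This completes the verification of the join identity, after which capping with $s(\hat{G})$ gives the lemma.
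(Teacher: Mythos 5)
Your argument is correct, and it is worth noting that for this particular lemma the paper gives no proof at all: it simply cites Aluffi's Corollary 4.5, which is stated and proved only in the absolute case $X=\{\mathrm{pt}\}$. What you have written is the natural relative analogue of that argument, and it is consistent with the machinery the paper does develop: the reduction to the identity $c(\hat{G})\cap \hat{i}_*s(\hat{Z},P(F)) = (c(G)\cap i_*s(Z,P(E)))\vee P(K)$, the use of flat pullback of Segre classes along $\lambda$ and birational invariance along $\pi$, and then the excision sequence plus the dimension count $\dim P(K)=n+f-e-1$ are exactly the pattern used in the paper's proofs of Lemmas 2.2--2.4, now applied to the Segre class itself. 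The one step that deserves the emphasis you give it is the disposal of the discrepancy between $\tilde{Z}=\lambda^{-1}(Z)$ and $\pi^{-1}(\hat{Z})$ over the exceptional divisor: since both target classes are concentrated in codimension $\leq g$ (Lemma 2.5 applied on $P(E)$ and on $P(F)$, together with the fact that $\lambda^*$ and $\pi_*$ preserve codimension here), while the discrepancy is pushed forward from $P(K)$ and hence lives in codimension $\geq e>g$, the difference must vanish; your reasoning there is complete. So your proposal actually buys something the paper leaves implicit, namely a self-contained justification that the join formula survives the passage from projective spaces to projective bundles over a positive-dimensional base.
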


So if we assume $g < e$, then the other lemmas show that the minimal degree polynomials in $A_*(X)[t]$ representing $c(G)\cap i_*s(Z,P(E))$ and $c(G)\cap [P(E)]$ remain unchanged after taking the join with $P(K)$, and furthermore that $(c(G)\cap [P(E)])\vee P(K) = c(\hat{G})\cap [P(F)]$. By properties of the intersection product on $A_*(P(F))$ \cite[Example 8.1.6]{fulton1}, $$s(\hat{G})\cap ((c(G)\cap i_*s(Z,P(E)))\vee P(K)) = \frac{(c(G)\cap i_*s(Z,P(E)))\vee P(K)}{c(\hat{G})\cap [P(F)]}.$$  Therefore, the right-hand side of Lemma 2.6 is equal to $\zeta_{G,s}(c_1(O_{P(F)}(1))\cap[P(F)])$, proving Theorem 1.2. 

\section{Extension of bundles and sections}

Parts (a), (b) of Proposition 1.4 follow immediately from several results on the extension of reflexive coherent sheaves from open subschemes which we list here. Recall that a reflexive sheaf on a scheme $Y$ is an $\OOs_Y$-module which is isomorphic to its double dual via the canonical evaluation map. Locally free sheaves are reflexive in particular. We have the following.

\begin{lemma}
If $Y$ is a smooth scheme, $j:U\hookrightarrow Y$ an open immersion with $\codim(Y\setminus U)\geq 2$, and $\GGs$ a reflexive coherent sheaf on $U$, then
\begin{itemize}
\item $j_*\GGs$ is coherent and reflexive,
\item $(j_*\GGs)\big|_U \cong \GGs$,
\item if $\FFs_1, \FFs_2$ are any reflexive coherent sheaves on $Y$ with $\FFs_1\big|_U\cong \FFs_2\big|_U$, then $\FFs_1\cong\FFs_2$.
\end{itemize}
\end{lemma}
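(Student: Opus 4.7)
The three bullet points are classical facts about reflexive sheaves on normal (in particular smooth) schemes, so my plan is largely to assemble the right references rather than reprove them from scratch. The key underlying principle is that on a normal scheme $Y$, a reflexive coherent sheaf $\FFs$ has depth $\geq 2$ at every point of codimension $\geq 2$, which makes it insensitive to removing or filling in closed subsets of codimension $\geq 2$.

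For the first bullet, I would invoke the fact (see Hartshorne, \emph{Stable reflexive sheaves}, Math. Ann.\ 254 (1980), Proposition 1.6 and its neighborhood, or Hartshorne's \emph{Algebraic Geometry} III, Exercise 5.15 combined with local-cohomology computations) that if $Y$ is normal, $j:U\hookrightarrow Y$ has $\codim(Y\setminus U)\geq 2$, and $\GGs$ is coherent reflexive on $U$, then $j_*\GGs$ is coherent and reflexive on $Y$. Coherence is the delicate part: on an affine open $\Spec A$ of $Y$ one identifies sections of $j_*\GGs$ with the double dual $M^{\vee\vee}$ of a suitable module $M$, and uses that for a noetherian normal domain the double dual of a finitely generated module is finitely generated. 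Reflexivity of $j_*\GGs$ then follows from Serre's criterion ($S_2$ plus locally free in codimension $1$), both of which propagate from $\GGs$ on $U$ to $j_*\GGs$ on $Y$ because smoothness provides the required depth.

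For the second bullet I would simply note that $j^*j_* \cong \id$ on quasi-coherent sheaves along any open immersion, so $(j_*\GGs)\big|_U \cong \GGs$ is formal. For the third bullet, the plan is to show that any reflexive coherent sheaf $\FFs$ on $Y$ satisfies $\FFs \cong j_*(j^*\FFs)$: the adjunction unit $\FFs \to j_*j^*\FFs$ is an isomorphism on $U$, and both sheaves are reflexive, hence $S_2$, hence sections over any open in $Y$ are determined by sections over its intersection with $U$ (using $\codim(Y\setminus U)\geq 2$ plus the depth $\geq 2$ condition). Given $\FFs_1\big|_U \cong \FFs_2\big|_U$, one then has $\FFs_1 \cong j_*(\FFs_1|_U) \cong j_*(\FFs_2|_U) \cong \FFs_2$, yielding the uniqueness claim.

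The main obstacle is really only in the first bullet, specifically the coherence of $j_*\GGs$; reflexivity is by now a clean consequence of the $S_2$/depth-$2$ characterization once coherence is known. Since the paper's use of this lemma is purely as a black box for Proposition 1.4 (a) and (b), I would keep the discussion brief, stating the lemma with a pointer to Hartshorne's stable reflexive sheaves paper (Propositions 1.6 and 1.11 there cover all three bullets essentially verbatim in the setting of normal schemes).
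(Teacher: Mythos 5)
Your proposal is correct and matches the paper's treatment: the paper likewise handles this lemma purely by citation, pointing to Hassett--Kov\'acs (\emph{Reflexive pull-backs and base extension}, Proposition 3.6 and Corollary 3.7) for all three bullets, exactly as you point to Hartshorne's stable reflexive sheaves paper. Your added sketch of the $S_2$/depth-$2$ mechanism and the coherence of $j_*\GGs$ is accurate but goes beyond what the paper records.
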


These facts are established in greater generality by Hassett and Kov\'acs \cite[Proposition 3.6, Corollary 3.7]{hassett_kovacs1}. Thus in our particular situation, the question of whether the bundle $G$ on $P(E)$ extends to a bundle on $P(F)$ is equivalent to whether the sheaf $j_*(\phi\big|_U)^*\GGs$ is locally free, where here $\GGs$ is the locally free sheaf of sections of $G$ and $j$ is the open immersion $U = P(F)\setminus P(K)\hookrightarrow P(F)$.

It seems to be an interesting question whether this is always true in our situation. In the case where $X$ is a point, and so $P(E) = \PP^{e-1}, P(F) = \PP^{f-1}$ are projective spaces, some related results have been established. For instance, Kempf \cite{kempf1} has proven that a bundle $G$ on $\PP^m$, $m\geq 2$, is a sum of line bundles if and only if the cohomology groups $H^1(\PP^m, \mathscr{H}om (\GGs, \GGs)(-a))$ vanish for all $a>0$ and $G$ extends to a bundle on $\PP^{m+1}$ in the sense that there is a bundle on $\PP^{m+1}$ which restricts to $G$ on $\PP^m\subseteq \PP^{m+1}$, though it turns out the cohomological requirement already implies the extendability of $G$, rendering it a redundant assumption \cite{kumar1}. It is also known that if $G$ extends in this way to $\PP^M$ for every $M > m$, then $G$ has the same total Chern class as a direct sum of line bundles, and if the rank of $G$ is $2$, then in fact $G$ must actually be a sum of line bundles \cite{barth_vdv1}.

Furthermore on $\PP^4$ there is the Horrocks-Mumford bundle, an indecomposable rank $2$ bundle (its total Chern class is $1 + 5H_4 + 10H_4^2\in A_*(\PP^4)$) that fails to extend to a bundle on $\PP^5$ \cite[Theorem 2.7]{hulek1}. 

In this work we avoid this uncertainty by focusing on bundles which split into direct sums of line bundles as these always extend in our sense. Every subscheme of $P(E)$ may be realized as the zero scheme of a section of such a bundle. Indeed, as we mentioned in Section 1.2, every closed subscheme $Z$ of $P(E)$ is the zero scheme of a section of some bundle on $P(E)$. By \cite[Exercise III.6.8 (b)]{hartshorne1}, this bundle injects into a sum of line bundles, and so we may construct a section of that split bundle whose zero scheme is $Z$.

So if the goal is to associate a relative Segre zeta function to a closed subscheme $Z$ of $P(E)$, the only issue stymieing Theorem 1.2 is whether we can find a sum of line bundles defining $Z$ of sufficiently small rank, specifically of rank $< e$.

Direct sums of line bundles extend to $P(F)$ because of the following well-known fact for which we make note of a brief argument due to lack of appropriate reference.

\begin{lemma}
Let $Y$ be a smooth variety, and $U$ an open subscheme. Suppose $L$ is a line bundle on $U$. Then there exists a line bundle $\hat{L}$ on $Y$ with $\hat{L}\big|_U \cong L$.
\end{lemma}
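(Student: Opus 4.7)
The plan is to reduce the statement to the surjectivity of the restriction map on Picard groups $\mathrm{Pic}(Y)\to\mathrm{Pic}(U)$, and to prove the latter by passing to divisor class groups, where closure of divisors gives an obvious lift.

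First I would use that since $Y$ is smooth, it is in particular locally factorial, so the canonical map $\mathrm{Pic}(Y)\to\mathrm{Cl}(Y)$ sending a Cartier divisor to the associated Weil divisor is an isomorphism (Hartshorne, Proposition II.6.11), and similarly for the open subscheme $U$, which inherits smoothness. So it is enough to show that every Weil divisor class on $U$ is the restriction of a Weil divisor class on $Y$.

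Next I would invoke the standard excision sequence for divisor class groups on a Noetherian integral scheme (Hartshorne, Proposition II.6.5): letting $Z_1,\ldots,Z_r$ be the prime divisors of $Y$ contained in the closed complement $Y\setminus U$, the sequence
\begin{equation*}
\bigoplus_{k}\ZZ\cdot[Z_k]\longrightarrow \mathrm{Cl}(Y)\xrightarrow{\;\;j^*\;\;}\mathrm{Cl}(U)\longrightarrow 0
\end{equation*}
is exact, where $j^*$ is induced by restriction of prime divisors. In particular $j^*$ is surjective.

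Concretely this means: given $L$ on $U$, pass to the associated Weil divisor $D=\sum n_\alpha D_\alpha$ on $U$, take the scheme-theoretic closures $\overline{D_\alpha}\subseteq Y$ (which are prime Weil divisors on $Y$ because $Y$ is integral and each $D_\alpha$ is codimension one in $U$, hence its closure is codimension one in $Y$), and form $\overline{D}:=\sum n_\alpha\overline{D_\alpha}$. Then $j^*\overline{D}=D$ on the nose, so pulling back through the isomorphism $\mathrm{Pic}(Y)\cong\mathrm{Cl}(Y)$ produces a Cartier divisor class, hence a line bundle $\hat{L}$ on $Y$ with $\hat{L}\big|_U\cong L$. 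There isn't really a serious obstacle in this argument; the only point that must not be skipped is the appeal to smoothness, which is precisely what is needed to identify $\mathrm{Pic}$ with $\mathrm{Cl}$ on both $Y$ and $U$, since the excision sequence by itself only gives surjectivity at the level of Weil divisor classes.
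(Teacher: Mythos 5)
Your proposal is correct and follows essentially the same route as the paper: both use smoothness to identify line bundles with Weil divisor classes on $Y$ and on $U$, and then lift a divisor from $U$ by taking closures of its prime components, which visibly restrict back to the original divisor. Your explicit appeal to the excision sequence for class groups is a slightly more formal packaging of the same closure argument the paper carries out by hand.
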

\begin{proof}
Due to the smoothness of $Y$ there is a bijective correspondence between Weil and Cartier divisors on $Y$ up to linear equivalence, and likewise for $U$. A line bundle $L$ on $U$ corresponds to a Cartier divisor, which then in turn corresponds to some Weil divisor $D := \sum_{i = 1}^d a_i [V_i]$ in $A_{\dim(U) - 1}(U)$. Here the $V_i$ are codimension one subvarieties of $U$, and the $a_i\in \ZZ$. Form the Weil divisor $\overline{D} := \sum_{i = 1}^d a_i [\overline{V_i}]$ in $A_{\dim(Y) - 1}(Y)$. Then $j^*\overline{D} = D$. Furthermore $\overline{D}$ corresponds to a Cartier divisor and thus a line bundle $\hat{L}$ on $Y$ which restricts to $L$.
\end{proof}

Therefore in our original situation, for an invertible sheaf $\LLs$ on $P(E)$, $j_* (\phi\big|_U)^*\LLs$ is also invertible by Lemma 3.1. The functor $j_*(\phi\big|_U)^*$ is additive so if we have a direct sum of invertible sheaves on $P(E)$ we may simply apply it to get a sum of invertible sheaves on $P(F)$.

A section $s$ of a bundle $G$ on $P(E)$ is specified by giving a map $\mathscr{O}_{P(E)}\rightarrow \mathscr{G}$. Assuming $j_* (\phi\big|_U)^* \GGs =: \hat{\GGs}$ is locally free, to get a section of the corresponding bundle $\hat{G}$, we apply $j_* (\phi\big|_U)^*$ to $\mathscr{O}_{P(E)}\rightarrow \mathscr{G}$ and note $$j_* (\phi\big|_U)^* \OOs_{P(E)} \cong \OOs_{P(F)}$$ by Lemma 3.1. This section will be compatible with $s$ in the sense of property (ii) described in the introduction. Note that the condition that the rank of $E$ is $\geq 2$ in Proposition 1.4 is what allows us to satisfy the codimension constraint of Lemma 3.1 to get the desired uniqueness property.

This finishes the proof of Proposition 1.4.

\section{The case of subbundles}

\subsection{When $F = E\oplus L$}

A special situation worth describing is when the surjection of bundles $\phi: F\rightarrow E$ in Theorem 1.2 splits by way of $F$ being a sum of $E$ with a line bundle $L$ on $X$, $F = E\oplus L$. That is, if $$l:E\hookrightarrow E\oplus L = F$$ is the inclusion, $\phi\circ l = \id_E$. In this case Theorem 1.2 admits a lower-tech proof.

That $F = E\oplus L$ implies there is a closed embedding $l: P(E)\hookrightarrow P(F)$ for which $l^*O_{P(F)}(1) = O_{P(E)}(1)$ \cite[B.5.1]{fulton1}. Instead of the join operation defined in Section 2 we may use the Gysin pullback map $$l^*: A_*(P(F))\rightarrow A_*(P(E))$$ induced by this embedding to relate classes in $A_*(P(F)), A_*(P(E))$ \cite[6.2]{fulton1}. The embedding $l$ realizes $Z$ as an effective Cartier divisor in $\hat{Z}$ and we will abuse notation using $l^*$ to also denote the Gysin map $A_*(\hat{Z})\rightarrow A_*(Z)$. 

Note $l(P(E))$ is contained in the complement $U$ of the projectivization of the kernel of the vector bundle map $\phi$, $P(0\oplus L)$. Thus the stipulation that the vector bundle surjection splits in the above sense means that $$\phi\big|_U \circ l = \id_{P(E)}.$$ Here we abuse notation again, using $\phi$ to also refer to the induced rational map $\phi: P(F)\dashrightarrow P(E)$. 

Therefore if $\hat{G}$ is a bundle on $P(F)$ compatible with a rank $g$ bundle $G$ on $P(E)$, we have $l^*\hat{G} \cong G$. Further, if $\hat{s}$ is a section of $\hat{G}$ compatible with a section $s$ of $G$, and if $i:Z\hookrightarrow P(E),$ $\hat{i}: \hat{Z}\hookrightarrow P(F)$ are the zero schemes of $s, \hat{s}$, then we see $l^{-1}(\hat{Z}) = Z$.

Since $L$ is a line bundle, $P(E)$ is an effective Cartier divisor in $P(F)$ via $l$. The intersection $P(E)\cap \hat{Z} = Z$ is transverse in the sense that $P(E)$ does not contain the support of any irreducible component of the normal cone of $\hat{Z}$ in $P(F)$ \cite[B.5.3]{fulton1}, and so we may apply the following property of Segre classes, see \cite[Lemma 4.1]{aluffi3}:

\begin{lemma}
Let $Z\hookrightarrow W$ be schemes, $D$ a Cartier divisor on $W$ such that $D$ does not contain the support of any irreducible component of the normal cone of $Z$ in $W$, $C_Z W$. Then $$s(Z\cap D, D) = D\cdot s(Z, W).$$
\end{lemma}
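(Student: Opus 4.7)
My plan is to unpack the Segre class on both sides using the construction via the projective completion of the normal cone, and then use the transversality hypothesis to identify the two normal cones. Write $C = C_Z W$, let $p:C\to Z$ be the structural map, and recall that $$s(Z,W) = \eta_*\!\left(\sum_{j\geq 0}\zeta^j\cap [P(C\oplus 1)]\right),$$ where $\eta:P(C\oplus 1)\to Z$ is the projection and $\zeta = c_1(\OOs_{P(C\oplus 1)}(1))$, with an analogous formula holding for $s(Z\cap D,D)$ in terms of $C' := C_{Z\cap D}D$.

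The key geometric step is the cycle-level identification $[C'] = p^*D\cdot [C]$ on the restriction of $C$ to $Z\cap D$. This is exactly where the transversality hypothesis enters: locally, if $D = V(f)$ on $W$ and $Z = V(\IIs)$, then the graded ring $\bigoplus_{n\geq 0}\IIs^n/\IIs^{n+1}$ defining $C$ has the pullback of $f$ as a non-zero-divisor on each irreducible component (because no component of $C$ is contained in $p^{-1}D$), and the graded ring of $C'$ is obtained by reducing modulo this pullback without introducing embedded primes. Taking projective completions then produces $[P(C'\oplus 1)] = \tilde{D}\cdot [P(C\oplus 1)]$ where $\tilde{D}$ is the pullback of $D$ to $P(C\oplus 1)$, and the tautological line bundle $\OOs_{P(C\oplus 1)}(1)$ restricts to the tautological bundle on $P(C'\oplus 1)$.

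Once that identification is in hand, the remainder is formal. Using the projection formula and the commutativity of Gysin pullback along a Cartier divisor with capping by the first Chern class of $\OOs(1)$, one computes $$D\cdot s(Z,W) = \eta_*\!\left(\sum_{j\geq 0}\zeta^j\cap (D\cdot [P(C\oplus 1)])\right) = \eta'_*\!\left(\sum_{j\geq 0}(\zeta')^j\cap [P(C'\oplus 1)]\right) = s(Z\cap D,D),$$ where $\eta',\zeta'$ denote the analogous projection and tautological class for $C'$. The main obstacle is the cycle identification in the middle paragraph; once the transversality hypothesis is converted into the statement that a local equation of $D$ is a regular element on the graded algebra of $C$, the rest of the proof proceeds by pure formalism of Segre classes.
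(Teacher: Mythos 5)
The paper does not actually prove this statement: it is quoted verbatim from Aluffi--Faber (``see \cite[Lemma 4.1]{aluffi3}''), so there is no internal proof to compare against. Judged on its own, the outer formal layer of your argument (projection formula applied to the $P(C\oplus 1)$ presentation of the Segre class) is fine, but the entire content of the lemma is concentrated in the cycle identity $[C'] = p^*D\cdot[C]$, and that is exactly the step you assert rather than prove. Moreover, the justification you offer for it is incorrect in detail. First, the associated graded ring of $Z\cap D$ in $D$ is \emph{not} obtained by ``reducing modulo the pullback of $f$'': writing $A=\OOs_W$ and $I$ for the ideal of $Z$ locally, one has $(\mathrm{gr}_I A/(f))_n \cong I^n/(I^{n+1}+fI^n)$ while $(\mathrm{gr}_{\bar I}\bar A)_n \cong I^n/(I^{n+1}+(fA\cap I^n))$, and these differ unless $fA\cap I^n \equiv fI^n$ modulo $I^{n+1}$ for all $n$ (a Valabrega--Valla--type condition). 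So $C'$ is in general a proper closed subscheme of $C\times_Z(Z\cap D)$, and the relevant issue is not ``embedded primes'' but whether the kernel of this surjection is supported in dimension $<\dim W-1$, i.e., whether the two schemes have the same fundamental cycle. Second, the hypothesis only guarantees that $f$ does not vanish identically on any irreducible component of $C$; it does not make $f$ a non-zero-divisor on $\mathrm{gr}_I A$ (embedded components of $C$ may well lie over $D$), so even the first half of your local claim overstates what the hypothesis gives you.

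The standard way to close this gap --- and the route taken in \cite{aluffi3} --- is to use the blow-up presentation $s(Z,W)=\pi_*\bigl(\sum_{j\ge1}(-1)^{j-1}E^j\bigr)$, where $\pi:\blow_ZW\to W$ has exceptional divisor $E$. The hypothesis says precisely that no component of $E$ maps into $D$, hence the cycle of the total transform $\pi^*D$ equals the cycle of the proper transform $\widetilde D\cong\blow_{Z\cap D}D$, whose exceptional divisor is $E|_{\widetilde D}$; the identity then follows from the projection formula. (If you prefer to stay with normal cones, run this same argument on the blow-up of $W\times\PP^1$ defining the deformation to the normal cone; that is what actually delivers your cycle identity for $P(C\oplus1)$.) As written, your middle paragraph restates the lemma rather than proving it.
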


With our Gysin map notation, Lemma 4.1 implies $l^*s(\hat{Z},P(F)) = s(Z,P(E))$. Gysin maps are compatible with proper pushfoward, see \cite[Theorem 6.2 (a)]{fulton1}, and so this applied to the fiber diagram

\begin{center}
\begin{tikzcd}
Z = \hat{Z}\cap P(E) \arrow[r,hookrightarrow, "l"] \arrow[d, hookrightarrow, "i"] & \hat{Z} \arrow[d, hookrightarrow, "\hat{i}"]\\
P(E) \arrow[r, hookrightarrow, "l"] & P(F)
\end{tikzcd}
\end{center}

in conjunction with Lemma 4.1 yields $$l^* \hat{i}_*s(\hat{Z}, P(F)) = i_* s(Z, P(E)).$$

The numerator of the relative Segre zeta function is $$c(G)\cap i_* s(Z, P(E)).$$ Notice that for any $\alpha\in A_*(P(F))$, $$l^* c(\hat{G})\cap \alpha = c(G)\cap l^*\alpha.$$ By Lemma 2.5, using the presentation for $A_*(P(E))$ again, this numerator may be represented by a polynomial of degree $\leq g$ in $t$. Furthermore, we have $$l^* (c(\hat{G})\cap \hat{i}_* s(\hat{Z}, P(F))) = c(G)\cap i_* s(Z, P(E)),$$ and additionally $$l^*c_1(O_{P(F)}(1))\cap [P(F)] = c_1(O_{P(E)}(1))\cap [P(E)].$$ 

Thus if we assume that $g < e$ as in Theorem 1.2, the same minimal degree polynomial in $t$ that represents $$c(\hat{G})\cap \hat{i}_* s(\hat{Z}, P(F))$$ also represents $c(G)\cap i_* s(Z, P(E))$. Similarly $c(\hat{G})\cap [P(F)]$, $c(G)\cap [P(E)]$ both have no terms of codimension exceeding $g$, and so as $$l^* c(\hat{G})\cap [P(F)] = c(G)\cap [P(E)],$$ both classes may also be represented by the same minimal degree polynomial in $t$.

Thus the rational expressions $\zeta_{G,s}(t)$ and $\zeta_{\hat{G},\hat{s}}(t)$ are identical, and therefore $$\zeta_{G,s}(c(O_{P(F)}(1))\cap[P(F)]) = \hat{i}_* s(\hat{Z}, P(F)),$$ proving Theorem 1.2.

\subsection{Reduction of the ambient space dimension}

Suppose again we have a rank $g$ bundle $G$ on $P(E)$, a section $s$ of $G$ with zero scheme $i:Z\hookrightarrow P(E)$, and suppose we have a rank $e^\prime := e - 1$ subbundle $E^\prime\hookrightarrow E$ so that $P(E^\prime)$ meets $Z$ transversally in the sense of Lemma 4.1. This time we are not assuming there is a surjection $E\rightarrow E^\prime.$

Let $l: P(E^\prime)\hookrightarrow P(E)$ be the induced closed embedding  and let $i^\prime: Z^\prime\hookrightarrow P(E^\prime)$ be the zero scheme of the section $l^*s$. Then as before, Lemma 4.1 shows $$l^*i_*s(Z,P(E)) = i^\prime_* s(Z^\prime, P(E^\prime)).$$ So with the assumption $g < e^\prime$,  $$\zeta_{G,s}(t) = \zeta_{l^*G, l^*s}(t)$$ completeing the proof of Proposition 1.5.

The use of this result is as a means of simplifying the projective bundle one needs to work within in order to compute $\zeta_{G,s}(t)$. If one has a filtration of subbundles of $E$ each of rank one less than the next but still exceeding $g$, Proposition 1.5 may be applied repeatedly to further reduce the projective bundle dimension. This is an analog of intersecting with hyperplanes in the absolute case, where the ability to reduce the ambient space dimension can have substantial practical benefits \cite[Example 5.11]{aluffi1}.

\section{Examples}

\subsection{Subschemes of projective space}

The result of Theorem 1.2 generalizes the notion of the Segre zeta function of a homogeneous ideal introduced in Aluffi \cite{aluffi1}. Using the notation of Section 1.2, we recover this special case when $X = \{\mathrm{pt}\}$, and $E = O^{\oplus (n+1)}_X$, so $P(E) = \PP^n$.

A homogeneous ideal of the homogeneous coordinate ring of $\PP^n$, say $I\subseteq k[x_0,\ldots,x_n]$, can be written as $I = (F_0,\ldots,F_r)$ for some choice of homogeneous generating set $\{F_0,\ldots,F_r\}$ of $I$. Each $F_j$ is a section of the bundle $O_{\PP^n}(d_j)$, where $d_j := \deg(F_j)$.

So as the closed subscheme of $\PP^n$ defined by $I$, $$i: Z := \Proj(k[x_0,\ldots,x_n]/I)\hookrightarrow \PP^n,$$ is the scheme-theoretic intersection of the effective Cartier divisors (hypersurfaces) cut out by the $F_j$, we see that $Z$ is the zero scheme of a section $s$ of the bundle $$G := \bigoplus_{j = 0}^r O_{\PP^n}(d_j).$$ So in this context, the rank constraint of Theorem 1.2 becomes $r+1 < n$.

Furthermore, note that $A_*(X) \cong \ZZ$ and $A_*(P(E))\cong \ZZ[t]/(t^{N+1})$. The total Chern class of $G$ is then $c(G)\cap [\PP^n] = \prod_{j = 0}^r(1 + d_j H_n)$ where $$H_n = c_1(O_{\PP^n}(1))\cap [\PP^n]\in A_*(\PP^n),$$ the class of a hyperplane, and the Segre zeta function has the form $$\zeta_{G,s}(t) = \frac{P(t)}{\prod_{j = 0}^r (1 + d_j t)},$$ which coincides with the Segre zeta function $\zeta_I(t)$ defined by Aluffi \cite{aluffi1} and recovers its rationality.

More is known about the numerator of this rational function in general, as described in \cite{aluffi1}. In particular since we may obtain $c(G)\cap i_*s(Z,\PP^n)$ as a pushforward of the total Chern class of the universal quotient bundle of $P(G\oplus 1)$, see the proof of Lemma 2.5, and since $G$ is globally generated, it follows this class $c(G)\cap i_*s(Z,\PP^n)$ is \emph{nonnegative} \cite[Example 12.1.7]{fulton1}. So the coefficients of $P(t)$ are nonnegative. Also, by Proposition 1.3, the highest codimension term of $c(G)\cap i_*s(Z,\PP^n)$ is $$c_g(G)\cap [\PP^n] = d_0\cdots d_r H_n^{r+1},$$ so the leading term of $P(t)$ is $d_0\cdots d_r t^{r+1}$.

For a larger projective space, given by $F = O_X^{\oplus(N+1)}$, $P(F) = \PP^N$, $N\geq n$, we can take $\hat{G} = \bigoplus_{j = 0}^r O_{\PP^N}(d_j)$ which is compatible with $G$, and use the obvious compatible section. If $\hat{i}: \hat{Z}\hookrightarrow \PP^N$ is the zero scheme of this section, and if $r + 1 < n$ then by Theorem 1.2, $$\hat{i}_* s(\hat{Z},\PP^N) = \zeta_{G,s}(H_N) = \zeta_{I}(H_N),$$ where now $H_N\in A_*(\PP^N)$ is the hyperplane class.

If we express $\zeta_{G,s}(t)$ as a formal power series, $\sum_{j \geq 0} a_j t^j \in \ZZ[[t]]$, it is then clear that we truncate everything but the first $N+1$ terms of this infinite series to obtain an integral polynomial in $H_N$ equal to $\hat{i}_* s(\hat{Z},\PP^N)$.

\subsection{Subschemes of products of projective spaces}

We may form a Segre zeta function for products of projective spaces and derive its properties as a convenient consequence of the relative Segre zeta function of Theorem 1.2. This formulation of the Segre zeta function has appeared in recent work where it is used to give a formula for the Chern-Schwartz-MacPherson class of a closed subscheme of a smooth projective variety \cite{aluffi2}. The alternate proof of Theorem 1.2 given in Section 4 may be used to derive the relative Segre zeta function we will need here as well as the absolute Segre zeta function of the previous example.

Consider the product $\PP^n\times \PP^m$. Here we may realize this scheme simultaneously as both the projective bundle $P(O_{\PP^n}^{\oplus (m+1)})$ and $P(O_{\PP^m}^{\oplus (n+1)})$. Let $p_n, q_m$ denote the projection maps to the first and second component of the product, respectively.

Fixing coordinates, $\PP^n(x_0:\ldots : x_n), \PP^m(y_0:\ldots : y_m)$, note that any closed subscheme $i: Z\hookrightarrow \PP^n\times\PP^m$ may be written as the zero scheme of a finite collection of polynomials $F_0,\ldots, F_r$, bihomogeneous in the $x_j$ and the $y_j$. For each $j$, let $(a_j, b_j)$ be the bidegree of $F_j$. More precisely, each $F_j$ corresponds to a section of the line bundle $$O_{\PP^n\times\PP^m}(a_j,b_j) := p_n^* O_{\PP^n}(a_j) \otimes q_m^* O_{\PP^m}(b_j).$$

So $Z$ is the zero scheme of a section $s$ of the bundle $$G := \bigoplus_{j = 0}^r O_{\PP^n\times\PP^m}(a_j,b_j).$$ The pushforward of its Segre class $i_* s(Z,\PP^n\times\PP^m)$ is a class in the Chow ring $$A_*(\PP^n\times\PP^m) = \ZZ[s,t]/(s^{n+1}, t^{m+1}),$$ where this isomorphism identifies $s,t$ with the pullbacks of the hyperplane classes in $\PP^n, \PP^m$ by $p_n, q_m$, respectively.

There are unique polynomials $P(s,t), Q(s,t)$ in $s,t$ representing the classes $c(G)\cap i_* s(Z,\PP^n\times\PP^m)$ and $c(G)\cap [\PP^n\times\PP^m]$ respectively, of degree $< n+1$ in $s$ and degree $< m+1$ in $t$. In particular, by the sum formula for Chern classes, $Q(s,t) = \prod_{j=0}^r (1 + a_j s + b_j t)$, provided $r < n, m$.

\begin{definition}
We define the \emph{Segre zeta function} of $G$ with respect to $s$ in this context to be $$\zeta_{G,s}(s,t) := \frac{P(s,t)}{Q(s,t)} = \frac{P(s,t)}{\prod_{j=0}^r (1 + a_j s + b_j t)} \in \ZZ[[s,t]].$$
\end{definition}

The bundle $G$ is globally generated when the $a_j, b_j\geq 0$, and so by analogous reasoning as in the previous example, the coefficients of $P(s,t)$ are nonnegative. By Proposition 1.3, the highest degree term of the numerator with respect to total degree comes from $c_g(G)\cap [\PP^n\times\PP^m]$ and so is $(a_0s + b_0 t)\cdots (a_r s + b_r t)$.

Denote by $H_n, H_m^\prime\in A_*(\PP^n\times\PP^m)$ the pullbacks of the hyperplane classes from $\PP^n, \PP^m$, by $p_n, q_m$ respectively. Then we recover the Segre zeta function defined in Section 1.2 for both the case where we consider $\PP^n\times\PP^m$ to be the projective bundle $P(O_{\PP^n}^{\oplus {m+1}})$ and $P(O_{\PP^m}^{\oplus {n+1}})$ as $\zeta_{G,s}(H_n, t)$ and $\zeta_{G,s}(s, H^\prime_m)$, respectively.

Suppose $N, M$ are integers with $N\geq n, M\geq m$. Then we have a closed subscheme $\hat{i} : \hat{Z}\hookrightarrow \PP^N(x_0:\ldots: x_N)\times\PP^M(y_0:\ldots : y_M)$ which is the zero scheme of the section $\hat{s}$ of the bundle $\hat{G} = \bigoplus_{j = 0}^r O_{\PP^N\times\PP^M}(a_j,b_j)$ on $\PP^N\times\PP^M$ corresponding to the $F_j$. If $r<n,m$, then all of our considerations establish that $$\hat{i}_* s(\hat{Z},\PP^N\times\PP^M) = \zeta_{G,s}(H_N,H^\prime_M).$$

For convenience, we collect these observations into a proposition.

\begin{proposition}
Let $n,m, r\geq 0$, be integers with $r < n,m$ and let $F_0,\ldots,F_r$ be bihomogeneous polynomials in $x_0,\ldots,x_n$, $y_0,\ldots,y_m$ of bidegrees $(a_0,b_0),\ldots,(a_r,b_r)$. Take $$G := \bigoplus_{j = 0}^r O_{\PP^n\times\PP^m}(a_j,b_j),$$ and let $s$ denote the section of $G$ determined by the $F_j$. Then there is a formal power series of the form $$\zeta_{G,s}(s,t) = \frac{P(s,t)}{\prod_{j=0}^r (1 + a_j s + b_j t)} \in \ZZ[[s,t]],$$ such that:
\begin{enumerate}
\item For any $N\geq n, M\geq m$, if $i_{N,M}: Z_{N,M}\hookrightarrow \PP^N\times\PP^M$ is the zero scheme of the section of $\bigoplus_{j = 0}^r O_{\PP^N\times\PP^M}(a_j,b_j)$ determined by the $F_j$, then $$(i_{N,M})_* s(Z_{N,M},\PP^N\times\PP^M) = \zeta_{G,s}(H_N,H^\prime_M)\in A_*(\PP^N\times\PP^M),$$ where $H_N, H^\prime_M$ the pullbacks of the hyperplane classes from $\PP^N, \PP^M$, respectively.
\item The coefficients of the polynomial $P(s,t)\in \ZZ[s,t]$ are nonnegative.
\item $P(s,t)$ is of degree $\leq n$ in $s$, of degree $\leq m$ in $t$, and its term of highest total degree is $(a_0s + b_0 t)\cdots (a_r s + b_r t)$. Its lowest total degree term is of total degree $\codim(Z)$, and represents the top dimension part of the pushforward of the fundamental class $[Z]$ in $A_*(\PP^n\times\PP^m)$.
\end{enumerate} 
\end{proposition}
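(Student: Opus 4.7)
The plan is to define $\zeta_{G,s}(s,t)$ directly from the Chow data on $\PP^n\times\PP^m$ and then to establish item (1) by applying Theorem 1.2 twice, once for each of the two natural projective bundle structures on $\PP^n\times\PP^m$. Items (2) and (3) then follow from the same arguments used in the absolute case together with Proposition 1.3.

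For the setup, I would use the fact that in $A_*(\PP^n\times\PP^m)=\ZZ[s,t]/(s^{n+1},t^{m+1})$ every class has a unique representative of bidegree at most $(n,m)$. Let $Q(s,t)$ and $P(s,t)$ be the representatives of $c(G)\cap[\PP^n\times\PP^m]$ and $c(G)\cap i_*s(Z,\PP^n\times\PP^m)$, respectively. By the sum formula for Chern classes, $\prod_{j=0}^r(1+a_js+b_jt)$ has total degree $r+1$, which by the rank hypothesis is at most $n$ and at most $m$, so no reduction modulo $s^{n+1}$ or $t^{m+1}$ occurs and $Q(s,t)$ is literally this product. By Lemma 2.5, the class $c(G)\cap i_*s(Z,\PP^n\times\PP^m)$ has no components of codimension exceeding $r+1$, so $P(s,t)$ has total degree at most $r+1$, in particular degree at most $n$ in $s$ and at most $m$ in $t$. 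Since $Q(0,0)=1$, we take $\zeta_{G,s}(s,t):=P(s,t)/Q(s,t)\in\ZZ[[s,t]]$.

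To prove (1), I would apply Theorem 1.2 in two steps. In step one, realize $\PP^n\times\PP^m$ and $\PP^N\times\PP^m$ as projective bundles $P(O_{\PP^m}^{\oplus(n+1)})$ and $P(O_{\PP^m}^{\oplus(N+1)})$ over $X=\PP^m$, linked by the canonical surjection of trivial bundles. Since $G$ is a direct sum of line bundles, Proposition 1.4(b) provides the compatible extension $\hat G=\bigoplus_{j=0}^r O_{\PP^N\times\PP^m}(a_j,b_j)$ with the obvious compatible section, and the rank condition $r+1<n+1$ of Theorem 1.2 holds. The theorem yields $(i_{N,m})_*s(Z_{N,m},\PP^N\times\PP^m)=\zeta_{G,s}(H_N,H_m^\prime)$ once one observes that the minimum-degree polynomials in $t$ over $A_*(\PP^m)$ representing $c(G)\cap i_*s(Z,\PP^n\times\PP^m)$ and $c(G)\cap[\PP^n\times\PP^m]$ are, under the identification $A_*(\PP^m)\cong\ZZ[H_m^\prime]/((H_m^\prime)^{m+1})$, exactly $P(t,H_m^\prime)$ and $Q(t,H_m^\prime)$. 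Step two repeats the construction with the roles of the factors swapped: view $\PP^N\times\PP^m$ and $\PP^N\times\PP^M$ as $P(O_{\PP^N}^{\oplus(m+1)})$ and $P(O_{\PP^N}^{\oplus(M+1)})$ over $\PP^N$, observe that $\hat G$ extends compatibly to $\bigoplus_{j=0}^r O_{\PP^N\times\PP^M}(a_j,b_j)$, and apply Theorem 1.2 with rank condition $r+1<m+1$ to conclude $(i_{N,M})_*s(Z_{N,M},\PP^N\times\PP^M)=\zeta_{G,s}(H_N,H_M^\prime)$.

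For (2) and (3), since each $a_j,b_j\geq 0$ the bundle $G$ is globally generated, and the construction in the proof of Lemma 2.5 presents $c(G)\cap i_*s(Z,\PP^n\times\PP^m)$ as a proper pushforward of the total Chern class of a globally generated quotient bundle; by Example 12.1.7 in Fulton this class is nonnegative, and hence so are the coefficients of $P(s,t)$. The degree bounds in (3) were already established in the setup, the highest total-degree term of $P(s,t)$ is identified by Proposition 1.3(b) with $c_{r+1}(G)\cap[\PP^n\times\PP^m]=\prod_{j=0}^r(a_js+b_jt)$ (using that the top Chern class of a direct sum of line bundles is the product of the first Chern classes of its summands), and the lowest total-degree term is identified by Proposition 1.3(a) with the minimal representative of the top-dimensional part of $i_*[Z]$, of total degree $\codim(Z)$. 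The main obstacle in the plan is the bookkeeping in step one of item (1): one must carefully verify that the univariate minimal polynomial representative over $A_*(\PP^m)$ produced by Theorem 1.2 really is the specialization $P(t,H_m^\prime)$ of the bivariate polynomial $P(s,t)$. Once this identification is clear, the two applications of Theorem 1.2 chain together without further work.
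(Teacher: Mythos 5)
Your proposal is correct and follows essentially the same route as the paper: realize $\PP^n\times\PP^m$ as a projective bundle over each factor in turn, extend $G$ as a direct sum of line bundles via Proposition 1.4(b), apply Theorem 1.2 once in each factor (with the rank conditions $r<n$ and $r<m$), and deduce (2) and (3) from global generation together with Proposition 1.3. The only difference is one of explicitness: the paper leaves the two-step chaining and the identification of the univariate representatives over $A_*(\PP^m)$ with the specializations $P(s,H_m')$, $Q(s,H_m')$ implicit, whereas you spell out this bookkeeping, which is exactly the verification the paper's ``all of our considerations establish'' is gesturing at.
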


\end{document}